\documentclass[12pt,twoside,reqno]{amsart}
\usepackage[letterpaper,textwidth=6.9in,textheight=9.6in,centering]{geometry}
\usepackage{amsmath, amsfonts, amssymb}
\usepackage{mathrsfs}
\usepackage{latexsym}
\usepackage{mathtools}
\usepackage{exscale}
\usepackage{cases}
\usepackage{enumitem}
\usepackage{needspace}

\usepackage[pdfstartview=FitH,
            CJKbookmarks=true,
            bookmarksnumbered=true,
            bookmarksopen=true,
            hypertexnames=false,
            colorlinks=true,
            linkcolor=blue,
            anchorcolor=blue,
            citecolor=red,
            urlcolor=blue
            ]{hyperref}
\hypersetup{
    pdftitle={On the Reverse Projection Inequality},
    pdfauthor={},
    pdfsubject={The sharp three-dimensional inequality, its equality cases, and geometric high-dimensional counterexamples},
    pdfkeywords={projection body, reverse projection inequality, mixed volume, equality case, simplex section, cube projection, geometric perturbation}
}

\numberwithin{equation}{section}
\allowdisplaybreaks

\newtheorem{theorem}{Theorem}[section]

\newtheorem{lemma}[theorem]{Lemma}
\newtheorem{corollary}[theorem]{Corollary}

\newtheorem{proposition}[theorem]{Proposition}

\newtheorem{definition and theorem}[theorem]{Definition and Theorem}

\def\bl{\begin{lemma}}
\def\el{\end{lemma}}
\def\bc{\begin{corollary}}
\def\ec{\end{corollary}}
\def\bt{\begin{theorem}}
\def\et{\end{theorem}}

\def\bp{\begin{proposition}}
\def\ep{\end{proposition}}
\def\be{\begin{equation}}
\def\ee{\end{equation}}
\def\baa{\begin{align*}}
\def\eaa{\end{align*}}

\theoremstyle{definition}
\theoremstyle{remark}

\DeclareMathOperator{\conv}{conv}

\DeclareMathOperator{\lin}{lin}

\DeclareMathOperator{\ext}{ext}
\DeclareMathOperator{\pos}{pos}

\DeclareMathOperator{\relint}{relint}
\DeclareMathOperator{\supp}{supp}

\newcommand{\R}{\mathbb{R}}

\newcommand{\Sph}{\mathbb S}
\newcommand{\one}{\mathbf 1}

\title[On the Reverse Projection Inequality]
{\large On the Reverse Projection Inequality}

\author[Y. Feng]{Yibin Feng}
\address{Yibin Feng, School of Mathematics and Statistics,
	Lanzhou University,
	Lanzhou, 730000, P.R. China.}
\email{fengyb@lzu.edu.cn}

\author[S. Hu]
{Shengnan Hu}
\address
{Shengnan Hu, MOE-LCSM, School of Mathematics and Statistics, Hunan Normal University, Changsha, 410081, P.R. China.}
\email{helen@hunnu.edu.cn}

\author[W. Liu]
{Weiru Liu}
\address
{Weiru Liu, School of Mathematics and Statistics, Central China Normal University, 
	Wuhan, 430079, P.R. China.}
\email{lwr1997@ccnu.edu.cn}

\author[L. Xu]
{Lei Xu}
\address
{Lei Xu, School of Statistics and Mathematics, Zhongnan University of Economics and Law, Wuhan, 430073, P.R. China.}
\email{leixumath@zuel.edu.cn}

\subjclass[2020]{52A40, 52A15, 52A20}
\keywords{Petty's conjectured
	projection inequality; Sharp reverse projection inequality; Projection body; Equality case; Counterexamples}

\begin{document}
	
	\begin{abstract}
		In \cite {ChenFengLiXiXu2026}, a reverse projection inequality for general convex bodies was established in three dimensions. In this paper, we completely characterize the equality cases of this inequality. We further show that the reverse projection inequality fails in every dimension $n\geq9$. The counterexamples are constructed as full-dimensional polytopes with at most $n+2$ facets.
	\end{abstract}
	
	\maketitle
	
	\section{Introduction and main results}
	\enlargethispage{2pt}
	
	The setting of this paper is the $n$-dimensional Euclidean space $\mathbb{R}^n$. The unit sphere in $\mathbb{R}^n$ is denoted by $\mathbb{S}^{n-1}$. A convex body in $\mathbb{R}^n$ is a compact convex set that has a non-empty interior. Let $\mathcal{K}^n$ denote the set of all convex bodies in $\mathbb{R}^n$. We write $\mathcal{K}_o^n$ for the class of convex bodies containing the origin in their interiors and $\mathcal{K}_e^n$ for the class of origin-symmetric convex bodies. We use $\mathcal{H}^k$ to denote the $k$-dimensional Hausdorff measure.
	
For $K \in \mathcal{K}^n$, its projection body $\Pi K$ is the
	origin-symmetric convex body with support function
	\begin{equation*}\label{eq:projection-body-definition}
		h_{\Pi K}(u)=V_{n-1}(K\mid u^\perp),\qquad u\in\Sph^{n-1}.
	\end{equation*}
	Here $V_j$ denotes $j$-dimensional volume and $K\mid u^\perp$ is the
	orthogonal projection of $K$ onto $u^\perp$; see
	\cite[Chapter~3]{Gardner2006} and \cite[Section~9.1]{Schneider2014}.  The projection body operator satisfies
	\begin{equation*}\label{eq:affine-transformation}
		\Pi(\phi K)=|\det\phi|\,\phi^{-t}\Pi K,
		\qquad \phi\in\mathrm{GL}(n).
	\end{equation*}
It follows immediately that 
	\begin{equation}\label{eq:def-Pn}
		\mathcal P_n(K):=V_n(\Pi K)/V_n(K)^{n-1}
	\end{equation}
	is invariant under nonsingular affine transformations. The affine invariant gives rise to two extremal problems. 
	
	For $n\geq3$, the minimization problem defined in \eqref{eq:def-Pn} is Petty's conjectured
	projection inequality
	\begin{equation}\label{eq:petty-lower}
		\mathcal P_n(K)\geq
		\frac{\omega_{n-1}^{\,n}}{\omega_n^{\,n-2}},
	\end{equation}
	where $\omega_j$ is the volume of the Euclidean unit ball in $\R^j$, and
	ellipsoids are the conjectured equality cases; see
	\cite{Petty1967,Petty1972,Lutwak1990}. In $\mathbb{R}^3$, the constant in
	\eqref{eq:petty-lower} is $3\pi^2/4$, and the sharp inequality, including its equality cases, was recently established in
	\cite[Theorem~1.1]{ChenFengLiXiXu2026}.  
	
	The present paper is concerned with the maximization problem for the affine invariant defined in \eqref{eq:def-Pn}. This problem arises from Schneider's study of random hyperplanes meeting a convex body \cite{Schneider1982}.  In its original setting of origin-symmetric convex bodies, the problem, commonly known as Schneider’s projection problem, asks for the optimal upper bound of $\mathcal P_n(K)^{1/n}$. Schneider
	conjectured that $\mathcal P_n(K)\leq2^n$ for every origin-symmetric convex body $K$, with equality if and only if $K$ is a Weil body (an affine images of a Cartesian products whose factors are either line segments or centrally symmetric planar convex bodies).
	Brannen \cite{Brannen1996} disproved this conjecture by constructing a counterexample.
	
	This paper focuses on the inequality by Brannen \cite{Brannen1996} for arbitrary convex bodies. That is, for every convex body  $K \in \mathcal{K}^n$,
	\begin{equation}\label{eq:brannen-general}
		\mathcal P_n(K)\leq \frac{(n+1)n^n}{n!}=\mathcal P_n(Z),
	\end{equation}
	with equality only for simplices $Z$. Several important estimates related to Schneider’s projection problem were obtained in
	\cite{BourgainLindenstrauss1988,GiannopoulosPapadimitrakis1999,LutwakYangZhang2001}.
	In particular, Lutwak-Yang-Zhang \cite{LutwakYangZhang2001} established the following non-sharp upper bounds:
	\[
	\mathcal P_n(K)\leq
	\begin{cases}
	2^n\bigl(n^n/n!\bigr)^{1/2},&\text {for }K \in \mathcal{K}_e^n,\\[2mm]
	n^n(n+1)^{(n+1)/2}(n!)^{-3/2},&\text{for }K \in \mathcal{K}^n,
	\end{cases}
	\]
	which are asymptotically optimal on the exponential scale. They also
	introduced a related centro-affine functional and established a modified
	version of Schneider’s projection problem for which parallelotopes are the equality
	cases \cite{LutwakYangZhang2001}. Brannen \cite{Brannen2005} investigated certain classes of three-dimensional projection bodies. Later, Saroglou \cite{Saroglou2011} determined the sharp maximum of $\mathcal P_3$ within the classes of three-dimensional zonoids, cones, and double cones, and characterized the corresponding equality cases; see
	also \cite{Saroglou2015} for the role of the second projection body and
	class-reduction arguments. Henk \cite{Henk2025} proved that every
	$n$-dimensional zonotope $Z$ with exactly $n+1$ generators satisfies
	$\mathcal P_n(Z)=2^n$, and constructed origin-symmetric examples showing
	that the symmetric supremum is at least
	$2^n(9/8)^{\lfloor n/3\rfloor}$ where $\lfloor n/3\rfloor$ is the greatest integer not exceeding $n/3$. 
	
	The sharp upper bound for $\mathcal P_3$ over the class $\mathcal{K}^3$ was established in \cite[Theorem~1.2]{ChenFengLiXiXu2026}. We recall the result below.
	
\begin{theorem}\cite[Theorem~1.2]{ChenFengLiXiXu2026} \label{thm:upper-inequality}
	For every convex body  $K \in \mathcal{K}^3$,
	\begin{equation}\label{eq:main-inequality}
	V_3(\Pi K)\leq18\,V_3(K)^2.
	\end{equation}
	The constant is sharp, and every tetrahedron attains equality.
\end{theorem}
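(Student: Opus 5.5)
Since $\mathcal P_3$ is affine invariant and continuous in the Hausdorff metric, and polytopes are dense in $\mathcal K^3$, I would prove \eqref{eq:main-inequality} for polytopes and pass to the limit. For a polytope $P$ with facet normals $u_i$ and facet areas $a_i$, Cauchy's formula gives $h_{\Pi P}(x)=\tfrac12\sum_i a_i|\langle x,u_i\rangle|$. Hence $\Pi P$ is the zonotope generated by the segments $\tfrac12 a_i[-u_i,u_i]$, and its volume is the explicit sum
\[
V_3(\Pi P)=\frac18\cdot 8\sum_{i<j<k} a_ia_ja_k\,|\det(u_i,u_j,u_k)|=\sum_{i<j<k}a_ia_ja_k|\det(u_i,u_j,u_k)|.
\]
For a tetrahedron this reduces to one check. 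With facet vectors $v_i=a_iu_i$ satisfying $\sum v_i=0$, all four triple determinants $|\det(v_i,v_j,v_k)|$ are equal, and each equals $\tfrac{9}{2}V^2$. The reason is that for a simplex, $\det$ of three facet normals scaled by area is $(3V)^2/2$ via the adjugate relation. The sum over the 4 triples is then $18V^2$, which confirms both the constant and sharpness.

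For the inequality itself I would use a variational, shape-reduction argument in the spirit of Saroglou's class reductions. By Minkowski's existence theorem, $P$ is determined by the vectors $v_i=a_iu_i$ summing to zero. The functional $F(v)=\sum_{i<j<k}|\det(v_i,v_j,v_k)|$ is $3$-homogeneous, while $V(P)^2$ is, by the Minkowski/mixed-volume inequality, bounded below in terms of the $v_i$. The plan has three steps.
\begin{enumerate}[label=(\roman*)]
\item Fix the number of facets $m$ and the direction pattern. Show that the ratio $F/V^2$ is maximized on a compact normalized set, and that a maximizer with $m\ge5$ facets can be improved by merging or splitting facet vectors.
\item Concretely, along a path that moves one $v_i$ linearly while preserving $\sum v_j=0$, the numerator $F$ is piecewise linear in the parameter. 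Meanwhile $V(P_t)^{1/3}$ is concave along Minkowski-linear (Blaschke sum) combinations by the Kneser–Süss inequality. So $V^{2}$ has a favorable convexity property and the ratio is quasi-convex in the parameter, which pushes extrema to endpoints where a facet vanishes or two normals coincide.
\item Iterate until $m=4$, i.e.\ a tetrahedron, where equality holds.
\end{enumerate}

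The main obstacle is step (ii). The monotonicity of $F/V^{2}$ requires comparing a piecewise-linear cubic form with $V^{2}=(V^{1/3})^{6}$, and Kneser–Süss concavity runs in the right direction only on segments. One also needs the endpoint polytope to remain full-dimensional and to keep the chosen normals. I would first try to handle this through a Blaschke-sum decomposition: write $P$ as a Blaschke sum of tetrahedra using a triangulation of the normal vectors. Then the inequality would reduce to a superadditivity statement,
\[
V\bigl(\textstyle\#_iT_i\bigr)^{2}\ \ge\ \frac1{18}\sum F\text{-contributions},
\]
with the cross terms in $F$ controlled by mixed volumes $V(T_i,T_j,T_k)$ via the Aleksandrov–Fenchel inequality. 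If this is not sharp enough in three dimensions, the fallback is a direct case analysis of the degenerate endpoints in step (ii).
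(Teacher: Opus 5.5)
Your reduction to polytopes, the zonotope formula $V_3(\Pi P)=\sum_{i<j<k}|\det(v_i,v_j,v_k)|$, and the tetrahedron computation (each of the four triples contributes $\tfrac92V^2$) are correct. So you do establish that every tetrahedron gives the value $18$. The inequality itself is not proved, however. It rests entirely on the quasi-convexity claim of step (ii), which is neither established nor supported by the tools you invoke.

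\begin{itemize}
\item The deformations along which $F$ is piecewise linear transfer $tw$ from one facet vector to another. They rotate the facet normals, so $P_t$ is not a Blaschke combination and Kneser--S\"uss says nothing about it.
\item Along genuine Blaschke combinations $S(P_t)=(1-t)S(P_0)+tS(P_1)$ the normals are fixed. But then each term of $F$ is a product of three affine functions of $t$, so $F$ is cubic, not piecewise linear.
\item Kneser--S\"uss gives concavity of $V^{2/3}$, not of $V^{1/3}$; the latter is Brunn--Minkowski, for Minkowski combinations.
\item Even granting $F$ convex and $V^{2/3}$ concave, obtaining quasi-convexity of $F/V^2=(F^{1/3}/V^{2/3})^3$ this way would require $F^{1/3}$ to be convex. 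That fails on every linear piece of $F$.
\item Mixed-volume inequalities point the wrong way for a lower bound on $V(P)^2$. Minkowski's first inequality gives $V(P)^2=\min_L\bigl(\tfrac13\sum_i h_L(v_i)\bigr)^3/V(L)$. It therefore yields only upper bounds on $V(P)^2$ in terms of the $v_i$, i.e.\ lower bounds on $\mathcal P_3$. That is exactly how Proposition~\ref{prop:transfer} manufactures counterexamples.
\end{itemize}
A lower bound on $V(P)^2$ in terms of the facet vectors is the entire content of the theorem, and your Blaschke-decomposition ``superadditivity'' fallback merely restates it.

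There is also a structural obstruction: nothing in steps (i)--(iii), as you describe them, uses $n=3$. The zonotope formula, the piecewise linearity of $F$ along transfer paths, Kneser--S\"uss (with $V^{(n-1)/n}$), and Minkowski's existence theorem all hold in $\R^n$. The same scheme would therefore give $\mathcal P_n(K)\le(n+1)n^n/n!$ in every dimension. But Theorem~\ref{thm:high-dimensional-counterexamples} produces, for each $n\ge9$, polytopes with only $n+2$ facets that beat the simplex. So the reduction to the simplex that (i)--(iii) are meant to achieve is false in general, and any proof must use something genuinely three-dimensional.

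The proof in \cite{ChenFengLiXiXu2026}, recalled in Section~\ref{a}, supplies this.
\begin{itemize}
\item Schneider's mixed translative functional gives $V_3(\Pi K)=b/3+c$ with $b=\mathcal B(-K,-K,-K)$ and $c=\mathcal B(K,-K,-K)$.
\item Both are mixed volumes of a diagonal copy of $K$ against coordinate copies of $K$: $c=90V_6(\bigtriangleup_2K[2],\iota_1K[2],\iota_2K[2])$ in $\R^6$, and $b=7560\,V_9(D[3],J_0[2],J_1[2],J_2[2])/v$ in $\R^9$.
\item Monotonicity of mixed volumes under $D\subset K\times K\times K$ and $\bigtriangleup_2K\subset K\times K$, with the product side computed by factorization over complementary coordinate subspaces, yields $b\le27v^2$ and $c\le9v^2$.
\item Hence $V_3(\Pi K)\le9v^2+9v^2=18v^2$.
\end{itemize}
The missing idea in your proposal is this diagonal embedding of $K$ in a product, which turns the extremal comparison into plain monotonicity of mixed volumes.
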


	Our first result shows that equality in \eqref{eq:main-inequality} holds only for tetrahedron.
	
	\begin{theorem}\label{thm:equality-case}
		Let $K \in \mathcal{K}^3$. If
		\[
		V_3(\Pi K)=18V_3(K)^2,
		\]
		then $K$ is a tetrahedron.
	\end{theorem}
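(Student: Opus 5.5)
We do not have the proof of Theorem~\ref{thm:upper-inequality} from \cite{ChenFengLiXiXu2026} in front of us. The natural approach is therefore to reconstruct the structure of a proof of \eqref{eq:main-inequality} that is robust enough to track equality, and then run it backwards. We expect that proof to go by a reduction: a class-reduction or symmetrization step (Steiner-type or shadow-system moves) that does not decrease $\mathcal P_3$, followed by an explicit computation on a restricted class. Our plan is to show that, whenever $K$ is not a tetrahedron, at least one step can be made strict.

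\textbf{Step 1: shadow systems.} The core tool should be the convexity of volume of projection bodies along parallel chord movements (shadow systems), in the spirit of Campi--Gronchi. If $K_t$ is a shadow system, then $V_3(K_t)$ is affine in $t$ and $t\mapsto V_3(\Pi K_t)$ is convex. Hence $t\mapsto\mathcal P_3(K_t)$ attains its maximum at an endpoint. Moving points of $K$ along a direction $v$ can push $K$ toward a body with fewer extreme directions, for instance by collapsing $K$ along chords to a cone or double cone. Saroglou's sharp results for cones and double cones \cite{Saroglou2011} then identify tetrahedra as the only maximizers within that class.

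\textbf{Step 2: equality propagates to the reduced body.} Suppose $V_3(\Pi K)=18V_3(K)^2$. Along any admissible shadow system through $K=K_0$ with $t\in[-1,1]$, the ratio satisfies $\mathcal P_3(K_t)\le 18$ throughout, with equality at the interior point $t=0$. Convexity of the numerator together with linearity of the denominator (after volume normalization) forces $V_3(\Pi K_t)$ to be affine on the whole interval. Consequently the endpoint bodies, which are cones or double cones, are also maximizers. Saroglou's equality characterization then says these endpoints are tetrahedra. It remains to lift this back: if both endpoints of a shadow system are tetrahedra and the projection-body volume is affine, then $K_0$ itself must be a tetrahedron.

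\textbf{Step 3: the main obstacle.} The hard part is exactly this rigidity statement, namely that affinity of $t\mapsto V_3(\Pi K_t)$ forces the combinatorics to be frozen. For the argument I would write $V_3(\Pi K_t)$ through mixed volumes of zonoids, $V(\Pi K_t,\Pi K_t,\Pi K_t)$, as an integral over $(\mathbb S^2)^3$ of $|\det(u_1,u_2,u_3)|$ against the surface area measures. The integrand $|\det|$ evaluated on the moving normals is convex in $t$, and it is strictly convex unless the relevant triple determinants keep a constant sign. Affinity then implies that almost every triple of facet normals (in the support of $S_K$) keeps the sign of its determinant for all $t$. For a body with a non-polytopal part, or with at least five facets, I would exhibit a triple whose determinant changes sign under a suitably chosen chord direction $v$, and this contradicts affinity. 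The freedom to choose $v$ is what should make the contradiction work.

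If this direct route becomes messy, a fallback is to first show that a maximizer must be a polytope. Non-smooth directions can be perturbed to increase $\mathcal P_3$ via a first-variation formula $\frac{d}{dt}V_3(\Pi(K+tL))=3V(\Pi K,\Pi K,\cdot)$-type expressions. Once $K$ is known to be a polytope, one compares against tetrahedra by cutting off a vertex and using a strict first-order increase, which should show that a polytope with more than four facets is not extremal.
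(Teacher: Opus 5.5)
Your proposal is a research program rather than a proof, and the steps that carry the weight are not supplied.

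\textbf{Step 1.} For a general shadow system, $V_3(K_t)$ is not affine; by Rogers--Shephard it is only convex. So you must restrict to volume-preserving moves such as parallel chord movements. For these, convexity of $t\mapsto V_3(\Pi K_t)$ is asserted without argument, and it is not automatic. For a polytope with frozen combinatorics the facet area vectors are affine in $t$, so the triple determinants in the zonotope formula for $V_3(\Pi K_t)$ are quadratic in $t$, and their absolute values need not be convex.

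More seriously, a parallel chord movement in direction $v$ fixes $K_t\mid v^\perp$ and every chord length parallel to $v$. So in general no such movement through $K$ ends at a cone or double cone. You would need an iterated scheme with a limit, which you do not give. Such a scheme, together with the claimed convexity and Saroglou's results for cones and double cones \cite{Saroglou2011}, would by itself prove Theorem~\ref{thm:upper-inequality}, so it is far from routine.

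\textbf{Steps 2--3.} Even granting all of this, continuity only tells you that some limit of transforms of $K$ is a maximizer, hence a tetrahedron. Transporting that back to $K$ is exactly the rigidity statement of Step~3, which you leave open. The mechanism you sketch for it does not work:
\begin{itemize}
\item $S_2(K_t,\cdot)$ is not a fixed set of weights on moving normals;
\item termwise convexity of $|\det|$ fails for the reason above;
\item facets split and merge along a shadow system;
\item after a limiting reduction there is no single shadow system connecting $K$ to the tetrahedron at all.
\end{itemize}
The fallback (``a maximizer is a polytope'', ``cutting a vertex gives a strict first-order increase'') is likewise not backed by any computation.

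\textbf{What the paper does instead.} Equality is traced through the specific inequalities that prove the bound, and the Preliminaries already record what is needed. One has $18v^2-V_3(\Pi K)=\tfrac13(27v^2-b)+(9v^2-c)$ with both terms nonnegative, so equality forces $b=27v^2$ and $c=9v^2$; see \eqref{eq:both-equalities}. Each of these is an equality case of a monotonicity inequality for mixed volumes: $M\le N$ in $\R^9$, and $V_6(C,C,Q[4])\le V_6(Q,C,Q[4])$ in $\R^6$. The paper characterizes them geometrically:
\begin{itemize}
\item via the Bihan--Soprunov criterion \cite{BihanSoprunov2019}: for a polytope, any three facets have a common point;
\item via the van Handel--Wang support theorem \cite{vanHandelWang2025}: any two extreme points of $K^\circ$ lie in a common proper exposed face.
\end{itemize}
Bayer's classification of weakly neighborly $3$-polytopes \cite{Bayer1993} then makes $K$ a generalized pyramid or a triangular bipyramid. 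The three-facet condition excludes the bipyramid. For a pyramid with base $B$, the explicit formula $\mathcal P_3(K)=\tfrac94\bigl(V_2(B-B)/V_2(B)+2\bigr)$, combined with the equality case of the planar Rogers--Shephard inequality \cite{RogersShephard1957}, forces $B$ to be a triangle. No symmetrization or rigidity argument is needed.
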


	Our second result shows that inequality \eqref{eq:brannen-general} fails in every dimension $n\geq9$.

	\begin{theorem}
		\label{thm:high-dimensional-counterexamples}
		For every integer $n\geq9$, there exists a full-dimensional polytope
		$Q_n\subset\R^n$ with at most $n+2$ facets, such that
		\begin{equation}\label{eq:main-lower}
			\mathcal P_n(Q_n)>
			\frac{(n+1)n^n}{n!}.
		\end{equation}
	\end{theorem}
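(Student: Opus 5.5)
We would construct $Q_n$ explicitly as a polytope with few facets and compute $V_n(\Pi Q_n)$ via the zonotope formula. For a polytope $P$ with facets $F_1,\dots,F_m$, outer unit normals $u_i$ and areas $a_i=\mathcal H^{n-1}(F_i)$, the projection body is the zonotope
\[
\Pi P=\tfrac12\sum_{i=1}^m [-a_iu_i,a_iu_i].
\]
Its volume is therefore
\[
V_n(\Pi P)=\sum_{|I|=n}\prod_{i\in I}a_i\,\bigl|\det(u_i)_{i\in I}\bigr|.
\]
For a simplex this gives $\mathcal P_n=(n+1)n^n/n!$. Our plan is to take polytopes with $n+2$ facets, so that the sum has only $\binom{n+2}{2}$ terms, and to show that for suitable parameters the ratio beats the simplex once $n\ge 9$.

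\textbf{Choice of family.} A natural candidate is a product or join structure where everything factors. One option is the prism $\Delta_{n-1}\times[0,1]$, which has $n+2$ facets. A better option is a family interpolating away from the simplex, for instance the truncation of a simplex by a hyperplane parallel to a facet, or $\Delta_k\times\Delta_{n-k}$, which has $n+2$ facets.

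Take the case $P=\Delta_k\times\Delta_l$ with $k+l=n$. Then $\Pi(A\times B)$ is not a product, but the zonotope formula still factors. The normals split into $k+1$ normals in $\R^k$ and $l+1$ normals in $\R^l$. A nonzero determinant requires choosing exactly $k$ normals from the first group and $l$ from the second. Facet areas are $\mathcal H^{k-1}(\text{facet of }\Delta_k)\cdot V_l(\Delta_l)$, and similarly for the second factor. Hence
\[
V_n(\Pi P)=V_l(\Delta_l)^k\,V_k(\Delta_k)^l\,V_k(\Pi\Delta_k)\,V_l(\Pi\Delta_l),
\]
and dividing by $V_n(P)^{n-1}$ gives
\[
\mathcal P_n(\Delta_k\times\Delta_l)=\mathcal P_k(\Delta_k)\,\mathcal P_l(\Delta_l)=\frac{(k+1)k^k}{k!}\cdot\frac{(l+1)l^l}{l!}.
\]

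\textbf{Comparison with the simplex.} We compare this with $(n+1)n^n/n!$. By Stirling, $k^k/k!\approx e^k/\sqrt{2\pi k}$. The product therefore behaves like
\[
(k+1)(l+1)\,\frac{e^n}{2\pi\sqrt{kl}},
\]
while the simplex value behaves like
\[
(n+1)\,\frac{e^n}{\sqrt{2\pi n}}.
\]
With $k=l=n/2$ the ratio grows like $\sqrt n$ up to constants, so products win for large $n$.

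The threshold needs exact evaluation. Choose $k=\lfloor n/2\rfloor$, $l=\lceil n/2\rceil$ and check the inequality
\[
\frac{(k+1)k^k}{k!}\cdot\frac{(l+1)l^l}{l!}>\frac{(n+1)n^n}{n!}.
\]
Check it directly for $n=9,10$. Then prove it for all larger $n$ by showing that the ratio increases when passing from $n$ to $n+2$ (adding one to each of $k$ and $l$). This uses monotonicity of $(1+1/m)^m$ and elementary bounds such as $(1+1/m)^{m}<e<(1+1/m)^{m+1}$.

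\textbf{Fallback and main obstacle.} If the product fails near $n=9$, we would instead perturb: deform $\Delta_k\times\Delta_l$ within $(n+2)$-facet polytopes by moving one facet hyperplane, and use the explicit area and determinant formula to get a strictly larger value. The main obstacle is the sharp threshold $n\ge9$. The asymptotic argument is easy, but getting exact numerical superiority down to $n=9$, and a clean induction thereafter, requires care. The factorization of the zonotope volume for products should be verified rigorously, including the sign and normalization conventions for $\Pi$ and for $\mathcal P_k$ in lower dimensions.
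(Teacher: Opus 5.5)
Your product computation is right. In fact $\Pi(A\times B)=V_l(B)\,\Pi A\times V_k(A)\,\Pi B$ is a product, contrary to what you wrote, and this gives $\mathcal P_n(\Delta_k\times\Delta_l)=\mathcal P_k(\Delta_k)\mathcal P_l(\Delta_l)$ at once. The gap is the step ``check it directly for $n=9,10$'': that check fails, and the proof fails with it in exactly the range the theorem is about. With $s(k)=(k+1)k^k/k!$:
\begin{itemize}
\item At $n=9$ the balanced split gives $s(4)s(5)=\tfrac{160}{3}\cdot\tfrac{625}{4}\approx 8333$, against $s(9)=10\cdot 9^9/9!\approx 10676$, a ratio of about $0.78$. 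The splits $3+6$, $2+7$ and $1+8$ are worse.
\item At $n=10$ the best ratio is $s(5)^2/s(10)\approx 0.81$.
\end{itemize}
The balanced ratio $\frac{(k+1)(l+1)}{n+1}\binom nk\frac{k^kl^l}{n^n}$ does grow like $\sqrt n/(2\sqrt{2\pi})$. Because the constant is small, it is still about $0.976$ at $n=18$ and $0.995$ at $n=19$, and it first exceeds $1$ at $n=20$ (about $1.015$). So products of two simplices give counterexamples only for $n\ge 20$, using your $n\mapsto n+2$ induction started at $n=20,21$. They give none for $9\le n\le 19$.

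Your fallback does not repair this. Moving one facet hyperplane of $\Delta_k\times\Delta_l$ changes $\mathcal P_n$ only slightly, so a local perturbation cannot overcome a deficit of roughly $22\%$, and you offer no large-deformation computation. The missing idea is to perturb from a configuration that already attains the simplex value and show a strictly positive one-sided first variation. The paper does this in four steps:
\begin{itemize}
\item \textbf{Transfer inequality.} For an $n$-dimensional subspace $F\subset\mathbf 1^\perp\subset\R^m$, the polytope $Q\subset F$ with facet area vectors $-P_Fe_i$ satisfies $\Pi Q=P_FC_m$. Minkowski's first inequality against the section $S=(\lambda+F)\cap\Delta_{m-1}$ then gives $\mathcal P_n(Q)\ge n^nV_F(S)V_F(P_FC_m)$.
\item \textbf{Reduction.} For $m=n+2$ and $F=\{\mathbf 1,a\}^\perp$, the right side reduces to the one-variable quantity $D(a)f_a(t)$.
\item \textbf{Perturbation from a simplex.} At $a_0=(-1,0^9,1)$ two area vectors coincide, so $Q$ is a simplex and the bound equals the simplex value. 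Along $a_\varepsilon=(-1,\varepsilon b,1)$ the right derivative is $9\int_{\Delta_8}(L^2-|L|)\,d\bar p>0$, by the cube-folding lemma.
\item \textbf{Lifting.} Higher dimensions follow by appending zero coordinates (a pyramid lift), which only increases the ratio.
\end{itemize}
Your product family is a pleasant and much more elementary argument for $n\ge 20$, but it cannot reach the threshold $n=9$.
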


	The known results reviewed above, together with our findings, show that whether inequality \eqref{eq:brannen-general} holds depends on the dimension. The simplex bound in inequality \eqref{eq:brannen-general} is valid and sharp in dimensions two and three, and
	Theorem~\ref{thm:equality-case} completes the equality classification in
	dimension three.  On the other hand, Theorem~\ref{thm:high-dimensional-counterexamples} shows that the bound fails in every dimension  $n\geq9$. The remaining cases $4\leq n\leq8$ are still unresolved.

\noindent\textit{Plan of the paper.}
Section~\ref{a} introduces the notation and preliminary results used throughout the paper and presents the equality reduction derived from the known three-dimensional upper bound. Section~\ref{b} establishes the complete equality
characterization in dimension three by reducing the equality cases to
generalized pyramids and computing their projection volumes.
Section~\ref{c} constructs
a counterexample in dimension nine through a geometric first-variation
argument, and then extends the construction to every dimension
\(n\geq 9\) by means of a lifting procedure.
	
	\section{Preliminaries}\label{a}
	
	In this section, we collect some basic facts and notation concerning convex bodies. For further background, we refer the reader to the monographs by Schneider \cite{Schneider2014} and Gardner \cite{Gardner2006} and the paper \cite{ChenFengLiXiXu2026}.
	
	Let \(K\in \mathcal{K}_o^n\). The polar body of \(K\) is defined by
	\[
	K^\circ
	=
	\{
		y\in\mathbb{R}^n:
		\langle x,y\rangle\leq 1
		\text{ for every }x\in K
		\}.
	\]
	The correspondence \(K\mapsto K^\circ\) is called polarity. It reverses inclusions and satisfies
	\[
	(K^\circ)^\circ=K.
	\]
	When \(K\) is a polytope, polarity induces an inclusion-reversing correspondence between the nonempty proper faces of \(K\) and those of \(K^\circ\). More precisely, a \(k\)-dimensional face of \(K\) corresponds to an \((n-k-1)\)-dimensional face of \(K^\circ\). In particular, in dimension three, facets of \(K\) correspond to vertices of \(K^\circ\), edges correspond to edges, and vertices correspond to facets.
	
	For $u_1,u_2,u_3\in\Sph^2$, write
	\[
	\operatorname{pos}\{u_1,u_2,u_3\}
	:=\{\lambda_1u_1+\lambda_2u_2+\lambda_3u_3:
		\lambda_1, \lambda_2,\lambda_3\geq0\}.
	\]
	This is their positive hull, equivalently the smallest convex cone containing $u_1,u_2,u_3$.
	We define the {\it normalized solid angle} of this cone by
	\[
	\Gamma(u_1,u_2,u_3)
	=\frac{\mathcal H^2\bigl(\operatorname{pos}\{u_1,u_2,u_3\}\cap\Sph^2\bigr)}{4\pi}.
	\]
Then, Schneider’s mixed translative functional is given by
\begin{equation*}\label{translativefunctional}
	\mathcal B(K_1,K_2,K_3)
	=
	\int_{\Sph^2\times \Sph^2\times \Sph^2}\Gamma(u_1,u_2,u_3)
	\left| \det(u_1,u_2,u_3)\right|
	\prod_{i=1}^3  d S_2(K_i,u_i),
\end{equation*}
where $K_1,K_2,K_3\in \mathcal{K}^3$. The functional $\mathcal B$ is symmetric and invariant under simultaneous
reflection of all three arguments. Hence, for $K\in \mathcal{K}^3$ we have
\[
\mathcal B(-K,-K,-K)=\mathcal B(K,K,K),\qquad
\mathcal B(K,-K,-K)=\mathcal B(K,K,-K).
\]

Let \(L_1,\ldots,L_r\subset\mathbb{R}^m\) be compact convex sets.
Their mixed volumes are defined by the Minkowski polynomial identity
\[
{V}_m\left(
\lambda_1L_1+\cdots+\lambda_rL_r
\right)
=
\sum_{i_1,\ldots,i_m=1}^{r}
\lambda_{i_1}\cdots\lambda_{i_m}
V_m\left(L_{i_1},\ldots,L_{i_m}\right),
\ \ \
\lambda_1,\ldots,\lambda_r\geq 0.
\]
We write \(L_j[\alpha_j]\) when \(L_j\) occurs
\(\alpha_j\) times among the arguments. In particular,
\[
V_m(L[m])={V}_m(L).
\]
In $\R^6=\R^3\times\R^3$. we define that for $K\in \mathcal{K}^3$,
		\[
	v=V_3(K),\ \ 
	b=90V_6\bigl((-\bigtriangleup_2K)[2],\iota_1K[2],\iota_2K[2]\bigr),\ \ 
	c=90V_6\bigl(\bigtriangleup_2K[2],\iota_1K[2],\iota_2K[2]\bigr).
	\]
Here,
	\[
\bigtriangleup_2K=\{(x,x):x\in K\},\ \ 
\iota_1K=K\times\{0\},\ \ 
\iota_2K=\{0\}\times K.
\]
By
	\cite[Lemma~5.1]{ChenFengLiXiXu2026}, we see that
	\[
		b=\mathcal B(-K,-K,-K),\ \
		c=\mathcal B(K,-K,-K).
	\]
Equations (5.5), (5.6) and (5.9) in \cite{ChenFengLiXiXu2026} give
	\begin{equation*}\label{eq:quoted-upper-data}
		V_3(\Pi K)=\frac b3+c,\ \ \
		b\leq27v^2,\ \ \  c\leq9v^2.
	\end{equation*}
This implies	
\begin{equation*}\label{eq:quoted-upper-data1}
	V_3(\Pi K)\leq 18v^2.
\end{equation*}		
Since 		
\begin{equation*}
	18v^2-V_3(\Pi K)=\frac{27v^2-b}{3}+(9v^2-c),
\end{equation*}				
noting that $27v^2-b\geq0$ and $9v^2-c\geq0$, we obtain
	\begin{equation}\label{eq:both-equalities}
		V_3(\Pi K)=18v^2
		\quad\Longleftrightarrow\quad
		b=27v^2\ \text{ and }\ c=9v^2.
	\end{equation}

	\section{Proof of Theorem~\ref{thm:equality-case}}\label{b}

	A finite multiset of compact convex sets $L_1,\ldots,L_r\subset\R^n$ is called
	\emph{essential} if
	\[
	\dim\Bigl(\sum_{i\in I}L_i\Bigr)\geq |I|
	\]
	for every nonempty index set $I\subseteq\{1,\ldots,r\}$.  Here, 
	\[
	\sum_{i\in I}L_i=\left\{
	\sum_{i\in I} x_i : x_i\in L_i
	\right\},
	\]
and $|I|$ denotes the number of selected indices. 
	
	We first characterize the equality case $b=27v^2$ for polytopes $K$.  
	
	\begin{proposition}\label{prop:polytope-b-equality}
		Let $K\subset\R^3$ be a convex polytope.  Then
		\begin{equation}\label{eq:b-facet-condition}
			b=27V_3(K)^2
			\quad\Longleftrightarrow\quad
			\text{every three distinct facets of $K$ have a common point}.
		\end{equation}
		In particular, if $V_3(\Pi K)=18V_3(K)^2$ then $K$ is a tetrahedron.
	\end{proposition}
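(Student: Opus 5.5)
We plan to work directly with the integral representation $b=\mathcal B(-K,-K,-K)$. For a polytope $K$ with facets $F_1,\dots,F_N$, outer unit normals $u_1,\dots,u_N$ and areas $a_i=\mathcal H^2(F_i)$, the surface area measure is $S_2(K,\cdot)=\sum_i a_i\delta_{u_i}$, and so
\[
b=\sum_{i,j,k}a_ia_ja_k\,\Gamma(-u_i,-u_j,-u_k)\,|\det(u_i,u_j,u_k)|
=6\sum_{i<j<k}a_ia_ja_k\,\Gamma(u_i,u_j,u_k)\,|\det(u_i,u_j,u_k)|,
\]
since $\Gamma$ is invariant under the antipodal map and degenerate triples contribute nothing. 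The first goal is to recover $27v^2$ as a comparable sum. We will use $v=\tfrac13\sum_i a_i h_i$, with $h_i=h_K(u_i)$ taken from an interior origin, together with the identity $\sum_i a_iu_i=0$. We then want a triple sum expressing $27v^2$ (or $27v^2-b$) as a sum of nonnegative terms, one for each triple of facets. A natural route is to go through the mixed-volume description $b=90V_6((-\bigtriangleup_2K)[2],\iota_1K[2],\iota_2K[2])$ and compare it with $90V_6(\iota_1K[3],\iota_2K[3])=\binom63^{-1}\cdot 90\cdot\ldots$. More concretely, $V_6(-\bigtriangleup_2K,\ldots)$ is monotone in the first pair of arguments. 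The inequality $b\le 27v^2$ from \cite{ChenFengLiXiXu2026} most likely arises from the inclusion $-\bigtriangleup_2K\subset \iota_1(K-K)$-type bounds, or from $\bigtriangleup_2(-K)\subset \iota_1K+\iota_2(-K)$ after translation, combined with multilinearity.

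The plan for the equivalence is as follows. First, write $27v^2-b$ as a nonnegative combination of terms indexed by triples $(i,j,k)$ of facets. Second, identify the term for a linearly independent triple $u_i,u_j,u_k$ geometrically: the three facet planes meet in a single point $p_{ijk}$, and the term should vanish exactly when $p_{ijk}\in F_i\cap F_j\cap F_k$. A practical way to obtain this is a mixed discriminant/Minkowski-type computation. For three facet-supporting half-spaces, the solid angle $\Gamma(u_i,u_j,u_k)$ times $|\det|$ measures the proportion of the difference body $-K+K$ direction cone in which the corresponding faces of $-K$ and $K$ are in contact. By the face structure of mixed volumes of polytopes (the standard formula $V=\sum$ over essential tuples of faces), $b$ counts precisely those triples whose associated faces of the Minkowski sum form an essential family. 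This is why the notion of an essential multiset was introduced just before the proposition. The mixed volume $V_6$ equals its maximal value $27v^2/90$ exactly when every mixed cell that is present in the comparison body is also present for $-\bigtriangleup_2K$. This translates, triple by triple, into the condition that $F_i\cap F_j\cap F_k\ne\emptyset$ (parallel or dependent triples impose no condition, and we will check that these are automatically fine). We expect this second step, namely pinning down the exact termwise equality condition via essential families in the mixed-volume decomposition, to be the main obstacle. We would first try the characterization of positivity of mixed volumes (positive iff the family is essential) applied to the faces $F(-\bigtriangleup_2K,w)$, $F(\iota_1K,w)$, $F(\iota_2K,w)$ for $w\in\Sph^5$.

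For the final claim: if $V_3(\Pi K)=18V_3(K)^2$, then by \eqref{eq:both-equalities} we have $b=27v^2$, so every three facets of $K$ share a point. Suppose $K$ had $N\ge5$ facets. Take a vertex $p$ of $K$ and a facet $F$ not containing $p$. The facets through a vertex of $F$ that are distinct from $F$ must still meet every pair of other facets, and every triple of facets meets. By Helly-type counting in the polar polytope $K^\circ$, this means every three vertices of $K^\circ$ lie on a common facet, and therefore every triple of vertices of $K^\circ$ is contained in a face. A 3-polytope in which every three vertices lie on a common facet must be a tetrahedron: take a facet of $K^\circ$ and a vertex off it, and note that any two vertices of the facet together with an outside vertex determine a facet, which forces the facet to be a triangle and the number of vertices to be $4$. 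Hence $K^\circ$, and so $K$, is a tetrahedron.
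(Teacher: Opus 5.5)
Your proposal does not yet prove the main equivalence. You never produce an inequality whose equality case can be analyzed, and the termwise mechanism you anticipate is false. Because the integrand of $b$ carries $|\det(u_i,u_j,u_k)|$, facet triples with linearly dependent normals are invisible to it, and you expect them to ``impose no condition''.

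Take a triangular prism. Every triple of facets with independent normals (one base and two lateral facets) meets at a vertex. By contrast, the three lateral facets, and the two bases together with any lateral facet, have empty intersection. Correspondingly $b<27v^2$: for $K=T\times[0,1]$ with $T=\conv\{0,e_1,e_2\}$, your own formula gives $b=3<27/4=27v^2$. So $27v^2-b$ is not a sum of nonnegative terms attached to independent facet triples that vanish when the triple meets. The deficit is carried exactly by dependent triples.

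The six-dimensional comparisons you float do not help either. The inclusion $-\bigtriangleup_2K\subset(-K)\times(-K)$ plus multilinearity gives only $b\le 9V_3(K,K,-K)^2$, which equals $81v^2$ for a tetrahedron. Its touching condition at $(p,q)$ is $F(K,-p)\cap F(K,-q)\neq\varnothing$, a condition on pairs of faces (this is the mechanism of the $c$-case, Proposition~\ref{prop:c-equality-facial}), not on triples.

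The missing idea is to multiply by $v$ and lift to $\R^9=\R^3\times\R^3\times\R^3$. Take the triple diagonal $D=\{(x,x,x):x\in K\}$ and let $J_i$ be the copies of $K$ in the coordinate subspaces. Then $M=V_9(D[3],J_0[2],J_1[2],J_2[2])=vb/7560$, and $D\subset Q=K\times K\times K$ gives $M\le N=v^3/280$. This is exactly $b\le 27v^2$, with equality iff $M=N$.

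Strictness is then decided by the Bihan--Soprunov criterion. For $u=(u_0,u_1,u_2)\neq0$, the touching condition $D\cap F(Q,u)\neq\varnothing$ is precisely $F(K,u_0)\cap F(K,u_1)\cap F(K,u_2)\neq\varnothing$. When it fails, the tested multiset $\{Q[3],\iota_0F_0[2],\iota_1F_1[2],\iota_2F_2[2]\}$ is essential iff every $F_i$ is a facet or $K$ itself. Since $u_0,u_1,u_2$ are arbitrary, with no independence required, dependent triples are detected. A short reduction then passes to three distinct facets.

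For the final assertion, your polar argument matches the paper's in outline, but ``forces the number of vertices to be $4$'' needs justification. Once $K^\circ$ is simplicial with every vertex triple spanning a facet, $f_2=\binom m3$, $3f_2=2f_1$ and Euler's relation give $\binom m3=2m-4$, hence $m=4$. The detour through a vertex $p$ of $K$ and ``Helly-type counting'' plays no role.
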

	
	\begin{proof}
		
		Let $K\in \mathcal{K}^3$. In $\R^3\times\R^3\times\R^3$, we write
		\[
		\begin{gathered}
		D =\{(x,x,x):x\in K\},\\
		J_0=K\times\{0\}\times\{0\},\\
		J_1=\{0\}\times K\times\{0\},\\
		J_2=\{0\}\times\{0\}\times K,\\
	Q=J_0+J_1+J_2=K\times K\times K,
		\end{gathered}
		\]
where the addition is Minkowski addition. Define
		\[
		\begin{aligned}
		M&=V_9( D[3],J_0[2],J_1[2],J_2[2]),\\
		N&=V_9( Q[3],J_0[2],J_1[2],J_2[2]).
		\end{aligned}
		\]
		
		The calculation leading to equations~(5.7) and (5.8) in \cite {ChenFengLiXiXu2026} and the derivation of (5.9) in \cite {ChenFengLiXiXu2026} therein yield
		\[
		M=\frac{vb}{7560},\qquad M\leq N,\qquad N=\frac{v^3}{280}.
		\]
	Therefore,
		\[
		27v^2-b=\frac{7560}{v}(N-M).
		\]
		Thus
		\begin{equation}\label{eq:b-MN-equality}
			b=27v^2\quad\Longleftrightarrow\quad M=N.
		\end{equation}
	Consider the ordered nine-tuples
		\[
		\begin{split}
			\mathscr P=( D, D, D,
			J_0,J_0,J_1,J_1,J_2,J_2),\ \ \ \ 
			\mathscr Q=(Q, Q,Q,
			J_0,J_0,J_1,J_1,J_2,J_2).
		\end{split}
		\]
		Let $u=(u_0,u_1,u_2)\in(\R^3\times \R^3\times \R^3)\setminus\{0\}$. Define
		\[
		F_i:=F(K,u_i)=\{x\in K:\langle x,u_i\rangle=h_K(u_i)\},
		\qquad F(K,0):=K.
		\]
		Then
		\[
	 Q^u:=F(Q, u)=F_0\times F_1\times F_2,
		\]
		and 
		\begin{equation}\label{eq:triple-face-intersection}
			D\cap Q^u\neq\varnothing
			\quad\Longleftrightarrow\quad
			F_0\cap F_1\cap F_2\neq\varnothing.
		\end{equation}
		
	Let
		$(P_i)_{i=1}^9$ and $(Q_i)_{i=1}^9$ be the members of $\mathscr P$ and
		$\mathscr Q$, respectively, and define
		$T_u=\{i:P_i\cap Q_i^u\neq\varnothing\}$. The strict-monotonicity criterion of Bihan and Soprunov
		\cite[Theorem~3.3]{BihanSoprunov2019} implies that $M<N$ if and only if, for some $u\neq 0$, the collection
		\[
		\{Q_i^u:i\in T_u\}\cup\{Q_i:i\notin T_u\}
		\]
		is essential. 
		
		If
		\eqref{eq:triple-face-intersection} holds, every member of $\mathscr P$
		touches the exposed face of the corresponding member of $\mathscr Q$.
		The nine exposed faces lie in translates of the common hyperplane $u^\perp$,
		Therefore, $\{Q_i^u:1\leq i\leq9\}$ is not essential. If
		\eqref{eq:triple-face-intersection} fails, the multiset tested by the
		criterion is
		\[
		\{Q, Q, Q,
		\iota_0F_0,\iota_0F_0,
		\iota_1F_1,\iota_1F_1,
		\iota_2F_2,\iota_2F_2\},
		\]
		where $\iota_i$ embeds $\R^3$ into the $i$th coordinate subspace of
		 $\R^3\times \R^3\times \R^3$.  Any subcollection containing a copy of $Q$ has
		nine-dimensional Minkowski sum. For a subcollection containing no copy of $Q$, all its members lie in the three coordinate subspaces, which form a direct sum. Examining the two repeated copies in each coordinate shows that the displayed multiset is essential if and only if
		\[
		\dim F_i\geq2\qquad(i=0,1,2).
		\]
		Thus, the inequality \(M\leq N\) is strict precisely when there exist \(F_0,F_1,F_2\), each of which is either a facet of \(K\) or \(K\) itself, such that
		\[
		F_0\cap F_1\cap F_2=\varnothing,
		\]
		where repetitions among the facets are allowed. If one
		\(F_i\) equals \(K\), then the other two facets must be disjoint.
		If two of the \(F_i\) coincide, then the repeated facet must be
		disjoint from the remaining one. In either case, replacing the
		occurrence of \(K\), or one of the repeated copies, by another
		facet yields three distinct facets with empty intersection.
		Conversely, if three distinct facets of \(K\) have empty intersection, then choosing an outer normal for each of them yields the strict inequality \(M<N\). Therefore,
		\[
		M=N
		\quad\Longleftrightarrow\quad
		\text{every three distinct facets of \(K\) have a common point}.
		\]
		In view of \eqref{eq:b-MN-equality}, this proves \eqref{eq:b-facet-condition}.
			
		To prove the final assertion of the proposition, it remains to show
		that every three-dimensional polytope satisfying this condition is
		a tetrahedron.  Translate \(K\) so that \(0\in\operatorname{int}K\), and let $L=K^\circ$. Under polarity, the facets of \(K\) correspond to the vertices of \(L\), and condition \eqref{eq:b-facet-condition} becomes the assertion that every three vertices of \(L\) lie in a common proper face. Fix a facet \(G\) of \(L\) and choose a vertex \(z\notin G\). For any two distinct vertices \(x,y\) of \(G\), the preceding property yields a proper face \(H\) of \(L\) containing \(x,y,z\). Since \(z\notin G\), whereas \(x,y\in G\), the three vertices \(x,y,z\) are not collinear. Consequently, \(H\) is two-dimensional and hence is a facet of \(L\). Moreover, \(H\neq G\), and \(H\cap G\) is a face containing both \(x\) and \(y\). Since two distinct facets of a three-dimensional polytope intersect in at most an edge, \(H\cap G\) must be an edge. Thus \(x\) and \(y\) are adjacent. It follows that every pair of vertices of \(G\) is adjacent, forcing \(G\) to be a triangle. Since \(G\) was arbitrary, \(L\) is simplicial. Finally, every triple of vertices of \(L\) lies in a proper face, which, by simpliciality, must be the triangular facet spanned by those three vertices. Hence, every triple of vertices of \(L\) forms a facet.
		
		Let \(m=f_0(L)\) denote the number of vertices of \(L\). Since every triple of vertices of \(L\) forms a triangular facet, we have
		\[
		f_2(L)=\binom{m}{3}.
		\]
		Since \(L\) is a simplicial, each facet has three edges, and each edge belongs to exactly two facets. 
		Counting the edge-facet incidences yields
		\[
		3f_2(L)=2f_1(L).
		\]
		Combining this identity with Euler's relation
		\[
		f_0(L)-f_1(L)+f_2(L)=2
		\]
		and using \(f_0(L)=m\), we obtain
		\[
		f_2(L)=2m-4.
		\]
		Consequently,
		\[
		\binom{m}{3}=2m-4.
		\]
		Since every three-dimensional polytope has at least four vertices, we have \(m\geq4\). Hence
		\[
		\frac{m(m-1)(m-2)}{6}=2(m-2),
		\]
		and division by \(m-2>0\) yields
		\[
		m(m-1)=12.
		\]
		Thus \(m=4\). This implies that \(L\) is a tetrahedron. Since the polar of a tetrahedron is again a tetrahedron, \(K\) is also a tetrahedron. The final assertion now follows from \eqref{eq:both-equalities}.
	\end{proof}

	We next characterize the equality case $c=9v^2$ among convex bodies $K \in \mathcal{K}^3$.  
	
	Translate
	$K$ such that $0\in\operatorname{int}K$. Let
	\[
	L=K^\circ,\ \ \ C=\bigtriangleup_2K,\ \ \
	E_1=\iota_1K,\ \ \ E_2=\iota_2K,
	\ \ \ Q=E_1+E_2=K\times K.
	\]
Equation~(5.1) in \cite{ChenFengLiXiXu2026} gives
	\begin{equation}\label{eq:c-factorized-terms}
		\begin{aligned}
		V_6(C,E_1[2],E_2[3])
		&=V_6(C,E_1[3],E_2[2])=\frac{v^2}{20},\\
		V_6(C[2],E_1,E_2[3])
		&=V_6(C[2],E_1[3],E_2)=\frac{v^2}{20}.
		\end{aligned}
	\end{equation}
Indeed, it follows from the complementary-subspace factorization to each term that the three bodies contained in one coordinate subspace have intrinsic mixed volume \(V_3(K)=v\), while the projections of the remaining three bodies onto the complementary coordinate subspace also have mixed volume \(v\). Thus, each mixed volume in \eqref{eq:c-factorized-terms} equals \(v^2/20\)  since the normalization introduces the binomial coefficient \(\binom{6}{3}=20\). 

Expanding each copy of \(Q=E_1+E_2\) by multilinearity and discarding the terms that vanish by dimensionality, it follows from \eqref{eq:c-factorized-terms} and the
	definition of $c$ that
	\begin{align}
		V_6(C,Q[5])
		&=\binom52V_6(C,E_1[2],E_2[3])
		+\binom53V_6(C,E_1[3],E_2[2])\notag\\
		&=v^2,\label{eq:CQ5-explicit}\\
		V_6(C[2],Q[4])
		&=4V_6(C[2],E_1,E_2[3])
		+6V_6(C[2],E_1[2],E_2[2])
		+4V_6(C[2],E_1[3],E_2)\notag\\
		&=\frac{2v^2}{5}+\frac{c}{15}.
		\label{eq:C2Q4-explicit}
	\end{align}
	Since $C\subset Q$, we have
	\begin{equation}\label{eq:six-dimensional-monotonicity}
		V_6(C,C,Q[4])\leq V_6(Q,C,Q[4])=V_6(C,Q[5]).
	\end{equation}
	By subtracting \eqref{eq:C2Q4-explicit} from
	\eqref{eq:CQ5-explicit}, it follows that
	\begin{equation}\label{eq:c-mixed-volume-deficit}
		9v^2-c
		=15\bigl[V_6(Q,C,Q[4])-V_6(C,C,Q[4])\bigr].
	\end{equation}
	Thus, $c=9v^2$ if and only if equality holds in
	\eqref{eq:six-dimensional-monotonicity}.
	
	Let \(H\subset\mathbb{R}^m\) be a nonempty compact convex set and let \(w\neq0\). The touching cone \(T(H,w)\) is defined as the unique cone that is a face of some normal cone of \(H\) and satisfies
	\(
	w\in\operatorname{relint} T(H,w).
	\)
	We use the homogeneous convention
	$T(H,tw)=T(H,w)$ for $t>0$, and define
	\[
	T(H,0)=\lin(H-H)^\perp.
	\]
	A direction \(w\in\mathbb S^5\) is called \((Q[4],C)\)-extreme if every submultiset \(I\) of the multiset
	\(
	\{Q,Q,Q,Q,C\}
	\)
	satisfies
	\[
	\dim\left(\bigcap_{H\in I}T(H,w)\right)\leq 6-|I|,
	\]
	Note that \(|I|\) denotes the cardinality of \(I\), with multiplicities taken into account. Let $S_{Q[4],C}$ be the mixed area measure associated with four copies
	of $Q$ and one copy of $C$. Using the support theorem of Handel and Wang
	\cite[Theorem~1.10]{vanHandelWang2025} for arbitrary nonempty compact
	convex sets, including the lower-dimensional diagonal $C$, it follows that
	\begin{equation}\label{eq:support-theorem}
		\supp S_{Q[4],C}
		=\overline{\{w\in\Sph^5:w\text{ is }(Q[4],C)\text{-extreme}\}}.
	\end{equation}
	
	For $w=(p,q)\in\R^3\times\R^3$, we define
	\begin{equation*}\label{eq:gap}
		g(p,q)=h_Q(p,q)-h_C(p,q)
		=h_K(p)+h_K(q)-h_K(p+q)\geq0.
	\end{equation*}
	The representation of mixed volume yields
	\begin{equation}\label{eq:mixed-area-deficit}
		\begin{split}
			&V_6(Q,C,Q[4])-V_6(C,C,Q[4])\\
			&\hspace{2cm}=\frac16\int_{\Sph^5}g\,dS_{Q[4],C}.
		\end{split}
	\end{equation}
	Thus, equality in \eqref{eq:six-dimensional-monotonicity} holds if and only
	if $g$ vanishes on $\supp S_{Q[4],C}$.
	
		Write $\tau(r)=T(K,r)$ and let
	$s:\R^3\times\R^3\to\R^3$ be $s(a,b)=a+b$. Using \(Q=K\times K\), 
	\(
	C=\{(x,x)\in K\},
	\)
	and the definitions of the normal and touching cones, we obtain
		\begin{equation}\label{eq:touching-product-diagonal}
		\begin{split}
	T(Q,(p,q))=\tau(p)\times\tau(q),
	\ \ \ \
	T(C,(p,q))=s^{-1}\bigl(\tau(p+q)\bigr).
\end{split}
\end{equation}
	Here, \(s^{-1}\) denotes the inverse image under \(s\). Since the multiset associated with the mixed area measure in \eqref{eq:support-theorem} consists of four copies of \(Q\) and one copy of \(C\), the extremality condition reduces to three cases: the four copies of \(Q\), the single copy of \(C\), and all five bodies together. In fact, repeating \(Q\) does not change the corresponding intersection of touching cones, but it decreases the upper bound \(6-|I|\). Therefore, \(w\) is \((Q[4],C)\)-extreme if and only if
\begin{equation}\label{eq:extreme-three-conditions}
	\begin{split}
	\dim T(Q,w)\le 2,\qquad
	\dim T(C,w)\le 5,\qquad
	\dim\bigl(T(Q,w)\cap T(C,w)\bigr)\le 1.
\end{split}
\end{equation}
	
	We denote by \(\operatorname{ext}L\) the set of extreme points of \(L\). A point \(x\in L\) is called an extreme point if
	\[
	x=(1-t)y+tz,\qquad y,z\in L,\quad 0<t<1,
	\]
	implies \(x=y=z\). In particular, when \(L\) is a polytope, \(\operatorname{ext}L\) is precisely its set of vertices. The following proposition characterizes the equality case \(c=9v^2\) in terms of the facial structure of \(L\).
	\begin{proposition}\label{prop:c-equality-facial}
		Let $L=K^\circ$ as above. Then,
		\begin{equation}\label{eq:c-facial-equivalence}
			\begin{aligned}
				c=9v^2
				\ \ \Longleftrightarrow\quad
				\text{every two points of $\ext L$ lie in a common proper exposed face of $L$. }
			\end{aligned}
		\end{equation}
	\end{proposition}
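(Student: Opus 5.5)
The plan is to combine \eqref{eq:c-mixed-volume-deficit}, \eqref{eq:mixed-area-deficit} and \eqref{eq:support-theorem}. Together they reduce $c=9v^2$ to the statement that $g$ vanishes at every $(Q[4],C)$-extreme direction. Since $g$ is continuous, vanishing on the extreme directions is the same as vanishing on their closure. The proof then has two translations: one of the condition $g=0$, and one of the extremality conditions \eqref{eq:extreme-three-conditions}. Both should be phrased in terms of the facial structure of $L=K^\circ$.

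\emph{Step 1 (vanishing of $g$).} For $p,q\neq0$, $g(p,q)=0$ holds if and only if a single $x\in K$ maximizes both $\langle\cdot,p\rangle$ and $\langle\cdot,q\rangle$, that is, $F(K,p)\cap F(K,q)\neq\varnothing$. Put $\bar p=p/h_K(p)$ and $\bar q=q/h_K(q)$, which are boundary points of $L$. Then the condition says that some $x\in\partial K$ has $\langle x,\bar p\rangle=\langle x,\bar q\rangle=1$. Equivalently, $\bar p$ and $\bar q$ both lie in the proper exposed face $F(L,x)$. If $p=0$ or $q=0$, then $g=0$ trivially. So $g(p,q)=0$ exactly when the radial boundary points of $L$ in directions $p$ and $q$ lie in a common proper exposed face of $L$.

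\emph{Step 2 (extreme directions).} Use \eqref{eq:touching-product-diagonal} in \eqref{eq:extreme-three-conditions}. The middle condition is automatic, because $\dim\tau(r)\le 2$ for $r\neq0$ and $\tau(0)=\{0\}$. The first condition gives $\dim\tau(p)+\dim\tau(q)\le2$. If one of $p,q$ is $0$, then $g=0$ and the direction is harmless. The essential case is $\dim\tau(p)=\dim\tau(q)=1$, together with the third condition, which says that the $2$-dimensional cone $\tau(p)\times\tau(q)$ meets $s^{-1}(\tau(p+q))$ in dimension at most $1$. It remains to relate directions with one-dimensional touching cone to extreme points of $L$. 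By polarity, $\tau(p)$ is the ray $\R_+p$ exactly when no nontrivial segment of $\partial L$ through $\bar p$ lies in every exposed face containing $\bar p$. I expect this to identify such $\bar p$ with (a dense subset of) $\ext L$: it contains all exposed points, and it is contained in $\ext L$ up to closure. Straszewicz's theorem then lets us pass between exposed points and $\ext L$.

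\emph{Step 3 (the two implications).} For ``$\Leftarrow$'', take an extreme direction $(p,q)$ with both touching cones rays. Then $\bar p,\bar q\in\ext L$, possibly after approximation. By hypothesis they lie in a common proper exposed face, so Step~1 gives $g(p,q)=0$. Continuity extends this to the closure, hence to $\supp S_{Q[4],C}$, and so $c=9v^2$. For ``$\Rightarrow$'', take two exposed points $y,z$ of $L$ with outer directions $p,q$. I will check that $(p,q)$ satisfies the third condition in \eqref{eq:extreme-three-conditions}. The point is that $(p,0)$ and $(0,q)$ cannot both lie in $s^{-1}(\tau(p+q))$ unless $p$ and $q$ are parallel, and the parallel case is handled separately (if $q=-p$, then $\tau(0)=\{0\}$). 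Step~1 then yields a common exposed face for $y,z$. Finally, pass from exposed points to arbitrary points of $\ext L$ by approximation.

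The main obstacle is this final closure argument. A limit of proper exposed faces need not be exposed, so I would use a compactness argument on the supporting normals $x$ instead. Take $x_k\in\partial K$ with $\langle x_k,y_k\rangle=\langle x_k,z_k\rangle=1$, where $y_k\to y$ and $z_k\to z$ are exposed points. Pass to a limit $x\in\partial K$. Then $y,z\in F(L,x)$, which is a proper exposed face because $x\neq0$. This argument avoids the need to identify exactly which faces are limits of exposed ones. The same compactness handles the delicate identification in Step 2.
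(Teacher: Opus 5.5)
Your ``$\Leftarrow$'' direction is essentially the paper's argument. The identification in Step~2 is in fact exact: $\tau(p)=\pos F$ for the unique face $F$ of $L$ (exposed or not) with $\bar p\in\relint F$, so $\dim\tau(p)=1$ if and only if $\bar p\in\ext L$. No Straszewicz approximation is needed.

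The gap is in ``$\Rightarrow$'': nothing you write shows that $(p,q)$ is $(Q[4],C)$-extreme, and for general scalings it is not.
\begin{itemize}
\item The claim $\dim\tau(r)\le2$ for $r\neq0$ is false. If $\bar r$ lies in the relative interior of a $2$-dimensional face $G$ of $L$, then $\tau(r)=\pos G$ is $3$-dimensional. Then $T(C,(p,q))=s^{-1}(\tau(p+q))$ is $6$-dimensional, and the middle condition in \eqref{eq:extreme-three-conditions} fails.
\item Your argument for the third condition is a non sequitur. The cone $\{(\mu p,\nu q):\mu,\nu\ge0,\ \mu p+\nu q\in\tau(p+q)\}$ is $2$-dimensional as soon as $\tau(p+q)\cap\operatorname{span}\{p,q\}$ is $2$-dimensional, for instance whenever $\tau(p+q)$ is $3$-dimensional. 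Neither $(p,0)$ nor $(0,q)$ needs to belong to it.
\end{itemize}
For example, let $L$ be a regular icosahedron and $y,z$ vertices at graph distance two. When $\alpha=\beta$, the ray through $\alpha y+\beta z$ hits $\partial L$ at the midpoint of the edge joining their two common neighbours. When $\alpha\neq\beta$ are close, it hits the relative interior of one of the two adjacent triangles. For those scalings $(\alpha y,\beta z)$ violates both the second and the third condition, so it is not extreme and your argument gives no contradiction there.

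The missing idea, which is the core of the paper's proof, is a choice of scaling that forces extremality.
\begin{itemize}
\item If $y,z\in\ext L$ share no proper exposed face, then $(y,z)\subset\operatorname{int}L$.
\item Hence the planar section $W=L\cap\operatorname{span}\{y,z\}$ meets $\{\alpha y+\beta z:\alpha+\beta>1\}$.
\item Take $z_0=\alpha y+\beta z$ to be an extreme point of the face of $W$ on which $\alpha+\beta$ is maximal; then $\alpha,\beta>0$.
\item Because $z_0$ is extreme in $W$, the face $F$ of $L$ with $z_0\in\relint F$ satisfies $\dim F\le1$ and $\pos F\cap\operatorname{span}\{y,z\}=\R_+z_0$.
\item So $w=(\alpha y,\beta z)$ has $\dim T(Q,w)=2$, $\dim T(C,w)=3+\dim\pos F\le5$, and $\dim\bigl(T(Q,w)\cap T(C,w)\bigr)=1$, while $g(w)=\alpha+\beta-1>0$.
\end{itemize}
Your Step~1 does not depend on the scaling, since the radial points are $y$ and $z$ for all $\alpha,\beta>0$. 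Inserting this choice therefore repairs your argument. Because the construction applies directly to arbitrary extreme points, the detour through exposed points and the final compactness step become unnecessary, though they are harmless.
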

	\begin{proof}
		
		Assume that \(c=9v^2\), and let \(x,y\in\operatorname{ext}L\). If \(x\) and \(y\) lie in no common proper exposed face, then
		\[
		(x,y)\subset\operatorname{int}L.
		\]

		Suppose first that $x$ and $y$ are linearly independent. Take
		$P=\operatorname{span}\{x,y\}$ and $W=L\cap P$. Then every \(z\in P\) can be written uniquely as \(z=\alpha x+\beta y\). Since \(x\) and \(y\) are extreme points of \(L\), the affine line \(\{\alpha+\beta=1\}\) intersects \(W\) precisely in the segment \([x,y]\). That is,
		\[
		W\cap\{\alpha+\beta=1\}=[x,y].
		\]
		Since $(x,y)\subset\relint W$, $W$ has points in
		$\{\alpha+\beta>1\}$.  Choose an extreme point $z$ of the exposed face of
		$W$ on which $\alpha+\beta$ is maximal. Then
		\begin{equation}\label{eq:z-coordinates}
			z=\alpha x+\beta y,\qquad
			\alpha>0,\quad\beta>0,\quad\alpha+\beta>1.
		\end{equation}

		The point $z$ lies on $\partial L$.  Let $F$ be the unique minimal face of
		$L$ whose relative interior contains $z$.  Since $z$ is extreme in the
		planar section $W$, we must have $\dim F\leq1$.  If $F$ is a segment, its linear
		span cannot equal $P$.  Moreover, since \(0\in\operatorname{int}L\) the convex cone generated by \(F\) is pointed. Hence,
		\begin{equation}\label{eq:touching-polar-duality}
			\tau(x)=\R_+x,\qquad \tau(y)=\R_+y,\qquad
			\tau(z)=\pos F,\qquad \pos F\cap P=\R_+z.
		\end{equation}
		Here, we use the duality between faces of $L=K^\circ$ and touching cones of
		$K$.
		
		Let
		\[
		\widetilde w=(\alpha x,\beta y)
		\qquad\text{and}\qquad
		w=\frac{\widetilde w}{\|\widetilde w\|}\in\mathbb S^5.
		\]
		Equations \eqref{eq:touching-product-diagonal},
		\eqref{eq:z-coordinates}, and \eqref{eq:touching-polar-duality} give
		\[
		\begin{split}
			\dim T(Q,w)=2, \ \ 
			\dim T(C,w)=3+\dim\tau(z)\leq5, \ \
			\dim\bigl(T(Q,w)\cap T(C,w)\bigr)=1.
		\end{split}
		\]
		Therefore, $w$ is $(Q[4],C)$-extreme, and $z\in\partial L$ satisfies
		$h_K(z)=1$. Thus, by homogeneity we obtain
		\[
		g(\widetilde w)
		=h_K(\alpha x)+h_K(\beta y)-h_K(z)
		=\alpha+\beta-1>0.
		\]
		This contradicts \eqref{eq:support-theorem},
		\eqref{eq:mixed-area-deficit}, and equality in
		\eqref{eq:six-dimensional-monotonicity}.

		Suppose now that \(x\) and \(y\) are linearly dependent. Since each ray emanating from the origin meets \(\partial L\) in exactly one point, two distinct points of \(\partial L\) cannot be positive multiples of one another. Hence, $y=-\lambda x$ for some
		$\lambda>0$. Let $\widetilde w=(x,-x)$.  Then,
		
		\[
		T(Q,\widetilde w)=\mathbb R_+x\times\mathbb R_+(-x),
		\qquad
		T(C,\widetilde w)=\ker s,
		\]
		and
		\[
		T(Q,\widetilde w)\cap T(C,\widetilde w)
		=\mathbb R_+(x,-x)
		\]
		which is one-dimensional. Therefore, \(\widetilde w\) determines a \((Q[4],C)\)-extreme direction after normalization. On the other hand,
		\[
		g(x,-x)=h_K(x)+h_K(-x)>0,
		\]
		yielding the same contradiction. This proves the forward implication.
		
		Conversely, assume that every pair of extreme points of \(L\) lies in a common proper exposed face, and let \(w=(p,q)\in\mathbb S^5\) be \((Q[4],C)\)-extreme. If \(p=0\) or \(q=0\), then \(g(p,q)=0\). We may therefore assume that \(p,q\neq0\). The first condition in \eqref{eq:extreme-three-conditions}, together with the product formula for \(T(Q,w)\), yields
		\[
		\dim\tau(p)+\dim\tau(q)
		=\dim T(Q,w)\leq2.
		\]
		Since \(p,q\neq0\), both \(\tau(p)\) and \(\tau(q)\) have dimension at least one. The preceding inequality therefore implies
		\[
		\dim\tau(p)=\dim\tau(q)=1.
		\]
		By polar duality,
		\[
		\frac{p}{h_K(p)}
		\qquad\text{and}\qquad
		\frac{q}{h_K(q)}
		\]
		are extreme points of \(L\). Choose \(\xi\) to expose a common proper face of \(L\) containing these points, and scale \(\xi\) so that \(h_L(\xi)=1\). It follows that
		\[
		\xi\in F(K,p)\cap F(K,q).
		\]
	Therefore,
		\[
		h_K(p+q)\geq\langle\xi,p+q\rangle
		=h_K(p)+h_K(q).
		\]
		The reverse inequality follows from the subadditivity of \(h_K\). Thus,
		\[
		g(p,q)=h_K(p)+h_K(q)-h_K(p+q)=0
		\]
		for every \((Q[4],C)\)-extreme direction. By continuity, $g$ vanishes on its closure in
		\eqref{eq:support-theorem}. Equations \eqref{eq:mixed-area-deficit} and \eqref{eq:c-mixed-volume-deficit} now give \(c=9v^2\).
	\end{proof}
	
The facial condition obtained above leads naturally to the notion of weak neighborliness. A three-dimensional polytope is weakly neighborly if any two of its vertices belong to a common proper face. The classification theorem \cite[Theorem~15]{Bayer1993} states that every weakly neighborly three-dimensional polytope is either a pyramid or a triangular prism. More generally, a \emph{generalized pyramid} is the convex hull of a two-dimensional compact
convex body and one point outside its affine hull.
	
	The facial characterization obtained above, together with the classification of weakly neighborly three-dimensional polytopes, yields the following necessary geometric condition for equality.
	
	\begin{proposition}\label{prop:polar-classification}
		Assume that $K$ satisfies equality in \eqref{eq:main-inequality}. Then $K$
		is a generalized pyramid.
	\end{proposition}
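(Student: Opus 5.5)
We work with $L=K^\circ$, after translating $K$ so that $0\in\operatorname{int}K$, and prove that $L$ is a generalized pyramid. This suffices because the polar of a generalized pyramid is again a generalized pyramid: one vertex of $L$ corresponds to a planar base of $K$, and the planar base of $L$ corresponds to the apex-type structure of $K$. We will check this duality directly.

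By \eqref{eq:both-equalities}, equality gives $c=9v^2$. Proposition~\ref{prop:c-equality-facial} then says that $L$ is \emph{weakly neighborly in the generalized sense}: any two extreme points of $L$ lie in a common proper exposed face. We do not know in advance that $L$ is a polytope, so Bayer's classification \cite[Theorem~15]{Bayer1993} cannot be quoted directly. Instead we argue geometrically, following the pattern of the proof of Proposition~\ref{prop:polytope-b-equality}.

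\emph{Step 1.} Fix $x\in\ext L$. Every other extreme point $y$ lies in a proper exposed face together with $x$, so $\ext L\subset\bigcup_{F\ni x}F$, the union over proper exposed faces through $x$. We first show that not every point of $\ext L$ can be a smooth-type point with only one-dimensional faces. Take two extreme points $x,y$ with $[x,y]$ not an edge. Then they lie in a common two-dimensional exposed face $G$, so $L$ has a facet.

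\emph{Step 2.} Fix a two-dimensional exposed face $G$ and an extreme point $z\notin G$. For $x,y\in\ext G$, the common face containing $x,z$ is either an edge or a facet $H\neq G$ meeting $G$ in a segment or a point. Arguing as in the polytope case, if two extreme points $z,z'\notin G$ existed, the pairs $(z,z')$ and $(z,x)$ for all $x\in\ext G$ would force too many facets through the lines meeting $\partial G$. A counting argument replaces Euler's formula here. Concretely, the faces through $z$ cover $\ext G$, each such face meets $G$ in an edge of $G$ or a point, and the same holds for $z'$. Then $\conv(G\cup\{z,z'\})$ together with the face joining $z$ and $z'$ yields a triangular prism configuration, which is a polytope; otherwise $\ext L\setminus G=\{z\}$.

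\emph{Step 3.} In the second case, $L=\conv(G\cup\{z\})$ is a generalized pyramid. In the prism case, $L$ is a triangular prism, and its polar is a triangular bipyramid. We must exclude this case using the other equality, $b=27v^2$, via Proposition~\ref{prop:polytope-b-equality}: $K$ is then a polytope, so $K$ must be a tetrahedron, which is a contradiction since a bipyramid is not a tetrahedron. Also, a tetrahedron is a pyramid, which is consistent with the claim.

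The main obstacle is Step 2: extending Bayer's polytopal classification to arbitrary convex bodies. There, faces may be infinite in number and extreme points need not be exposed. To handle it, I would first try to show that $\ext L\setminus G$ is finite, using that each extreme point outside $G$ sees all of $\ext G$ through faces meeting $G$ in edges. I would then approximate by, or reduce to, the polytope case.
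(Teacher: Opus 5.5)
Your set-up is sound. Passing to $L=K^\circ$ and using Proposition~\ref{prop:c-equality-facial} to get that any two extreme points of $L$ lie in a common proper exposed face is correct. Your exclusion of the prism case via Proposition~\ref{prop:polytope-b-equality} is also correct. It in fact shows that whenever $L$, hence $K$, is a polytope, $K$ is a tetrahedron, which is slightly shorter than the paper's explicit treatment of the bipyramid.

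The gap is that the non-polytopal case, which is the real content of the proposition, is not proved. Step~2 describes a counting argument without carrying it out. Worse, the dichotomy it asserts is false for an arbitrary $2$-dimensional face $G$. Take $L$ a square pyramid with apex $a$ and base $p_1p_2p_3p_4$, and $G=\conv\{a,p_1,p_2\}$. Then $p_3,p_4\notin G$ are two extreme points outside $G$ and $L$ is weakly neighborly, yet $L$ is not a triangular prism. So $G$ would have to be the eventual base, and you give no way to identify it, nor to show that only one extreme point lies off its plane when $\ext L$ is infinite. Two smaller points: Step~1 presupposes a pair of extreme points not spanning an edge, which does not exist for a tetrahedron; and the polarity of generalized pyramids is asserted rather than checked.

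The missing idea is to apply Bayer's classification \cite[Theorem~15]{Bayer1993} not to $L$ but to $P_E=\conv E$ for finite $E\subset\ext L$ with $\dim P_E=3$.
\begin{itemize}
\item Every point of $E$ is a vertex of $P_E$. If $F=F(L,\xi)$ is a proper exposed face containing $x,y\in E$, then $F\cap P_E=F(P_E,\xi)$ is a face of $P_E$ of dimension at most $2$. Hence $P_E$ is weakly neighborly, so it is combinatorially a pyramid or a triangular prism.
\item If $\ext L$ is infinite, a seven-point $E_0$ must give a pyramid over a hexagon, with apex $a$ and base plane $H$, since a prism has only six vertices.
\item For any further $x\in\ext L$, $P_{E_0\cup\{x\}}$ is an eight-vertex pyramid whose base plane contains seven vertices. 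No three extreme points are collinear, so a plane other than $H$ contains at most four of the eight vertices: two hexagon vertices on its line of intersection with $H$, plus possibly $a$ and $x$. Hence the base plane is $H$ and $x\in H$.
\item Thus $\ext L\subset H\cup\{a\}$, and Krein--Milman gives $L=\conv((L\cap H)\cup\{a\})$.
\end{itemize}

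Your closing suggestion to ``reduce to the polytope case'' points in this direction. Note, however, that an arbitrary polytopal approximation of $L$ need not be weakly neighborly; only hulls of extreme points of $L$ inherit the property.
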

	
	\begin{proof}

		By translation invariance, we may assume that \(0\in\operatorname{int}K\) and let \(L=K^\circ\). It follows from 	\eqref{eq:both-equalities} and Proposition
		\ref{prop:c-equality-facial} that every pair of extreme points of \(L\) is contained in a common proper exposed face. Let \(E\subset\operatorname{ext}L\) be finite with \(\dim\operatorname{conv}E=3\), and define
		\[
		P_E=\operatorname{conv}E.
		\]
		Every point of \(E\) is a vertex of \(P_E\). Given \(x,y\in E\), choose a proper exposed face \(F\) of \(L\) containing both points. Then \(F\cap P_E\) is an exposed face of \(P_E\) containing \(x\) and \(y\). Since every proper face of \(L\) has dimension at most two, this face is proper. Thus, \(x\) and \(y\) lie in a common facet of \(P_E\). Hence, \(P_E\) is weakly neighborly. By the classification theorem, it is combinatorially equivalent to either a pyramid or a triangular prism.

	If \(\operatorname{ext}L\) is finite, then \(L\) is a polytope. Since \(L\) is weakly neighborly, the classification theorem shows that \(L\) is either a pyramid or a triangular prism. We therefore assume that \(\operatorname{ext}L\) is infinite. Choose a seven-element subset \(E_0\subset\operatorname{ext}L\) such that
	\[
	\dim\operatorname{conv}E_0=3.
	\]
	Since every point of \(E_0\) is a vertex of \(P_{E_0}=\operatorname{conv}E_0\), the polytope \(P_{E_0}\) has seven vertices. It cannot be a triangular prism, which has only six vertices, and must therefore be a pyramid over a hexagon. Denote its unique apex by \(a\), and let \(H\) be the plane containing the other six vertices.
	
		For any \(x\in\operatorname{ext}L\setminus E_0\), define
		\(
		P_x=\operatorname{conv}(E_0\cup{x}).
		\)
		Since all eight points are extreme points of \(L\), they are vertices of \(P_x\). Moreover, \(P_x\) is weakly neighborly. As a triangular prism has only six vertices, the classification theorem implies that \(P_x\) is a pyramid. Let \(H_x\) denote its base plane, which contains seven of its eight vertices.
		
		We claim that \(H_x=H\). Otherwise, \(H_x\cap H\) would be a line and could contain at most two of the six base vertices of \(P_{E_0}\), since no three distinct extreme points of \(L\) are collinear. The only remaining vertices are \(a\) and \(x\), so \(H_x\) could contain at most four vertices, contradicting the fact that it contains seven. Hence \(H_x=H\). Since \(a\notin H\), it follows that \(x\in H\).

		Let $B=L\cap H$.  Since $\ext L\subset B\cup\{a\}$, the
		finite-dimensional Krein--Milman theorem gives
		\[
		L=\overline{\conv}(\ext L)
		\subset\conv(B\cup\{a\})\subset L.
		\]
		Here, $B$ is compact. Thus, $\conv(B\cup\{a\})$ is closed and both inclusions
		are equalities.
		Moreover, $B$ is a two-dimensional compact convex body in $H$. Hence, $L$ is a
		generalized pyramid.  We have shown that $L$ is either a generalized pyramid
		or a triangular prism.
		
		Polarity preserves generalized pyramids.  Indeed, after a linear
		normalization we may write
		\[
		L=\conv\bigl((B\times\{-1\})\cup\{(0,t)\}\bigr),
		\qquad 0\in\operatorname{int}B\subset\R^2,\quad t>0.
		\]
		For $(y,s)\in\R^2\times\R$, the polar inequalities are
		\[
		h_B(y)-s\leq1,\qquad ts\leq1.
		\]
	Hence,
		\begin{equation}\label{eq:polar-generalized-pyramid}
			L^\circ
			=\conv\left(
			\{(0,-1)\}\cup
			\bigl((1+t^{-1})B^\circ\times\{t^{-1}\}\bigr)
			\right).
		\end{equation}
		Thus, $K=L^\circ$ is either a generalized pyramid or, by face-lattice
		duality, a triangular bipyramid.

		It remains to exclude the possibility that \(K\) is a triangular bipyramid. Let \(a\) and \(b\) be its apices, and let \(p_1,p_2,p_3\) be its equatorial vertices. The three facets
		\[
		\conv\{a,p_1,p_2\},\qquad
		\conv\{b,p_2,p_3\},\qquad
		\conv\{a,p_3,p_1\}
		\]
		have no common point. Since equality in \eqref{eq:main-inequality} implies \(b=27v^2\), Proposition \ref{prop:polytope-b-equality} implies that every three distinct facets of \(K\) to have a nonempty intersection. This contradiction excludes the case of a triangular bipyramid. Hence, \(K\) must be a generalized pyramid.
	\end{proof}
	
 The following proposition expresses the projection volume of a generalized pyramid in terms of its base and height.
	
	\begin{proposition}\label{prop:pyramid-formula}
		Let
		\begin{equation}\label{eq:normalized-pyramid}
			K=\conv\bigl((B\times\{0\})\cup\{(0,h)\}\bigr),
			\qquad h>0,\qquad A=V_2(B)>0.
		\end{equation}
		Then
		\begin{equation}\label{eq:pyramid-projection-volume}
			V_3(\Pi K)
			=\frac{Ah^2}{4}\bigl(V_2(B-B)+2A\bigr),
		\end{equation}
		and
		\begin{equation}\label{eq:pyramid-ratio}
			\frac{V_3(\Pi K)}{V_3(K)^2}
			=\frac94\left(\frac{V_2(B-B)}{A}+2\right).
		\end{equation}
	\end{proposition}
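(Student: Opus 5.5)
The plan is to compute $\Pi K$ from the surface area measure of $K$, write it as a zonoid plus a vertical segment, and evaluate its volume with the standard formula for the volume of such a sum. By translation invariance of \eqref{eq:pyramid-ratio}, and continuity of both sides in $B$ in the Hausdorff metric, I first reduce to the case where $B$ is a convex polygon with $0\in\operatorname{int}B$. Then \eqref{eq:pyramid-ratio} follows from \eqref{eq:pyramid-projection-volume} and $V_3(K)=Ah/3$, since $\frac{Ah^2}{4}(V_2(B-B)+2A)\big/(A^2h^2/9)=\frac94\bigl(V_2(B-B)/A+2\bigr)$. So the work is \eqref{eq:pyramid-projection-volume}.

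\emph{Step 1 (surface measure).} The polygon $B$ has edges $\ell_i$ with outer unit normals $\nu_i\in\Sph^1$ and support numbers $h_i=h_B(\nu_i)>0$, so that $\sum_i h_i\ell_i=2A$. The facets of $K$ are:
\begin{itemize}
\item the base, with area vector $-A e_3$;
\item the lateral triangles over the edges $\ell_i$, with area vectors $w_i=\tfrac12\ell_i\,(h\nu_i,\,h_i)\in\R^2\times\R$.
\end{itemize}
One checks directly that $\sum_i w_i=Ae_3$, i.e.\ the closure condition holds. By Cauchy's formula $h_{\Pi K}(u)=\frac12\sum_F|\langle u,a_F\rangle|$, where $a_F$ runs over the facet area vectors, so
\[
\Pi K=Z+A[-\tfrac12e_3,\tfrac12e_3],\qquad Z=\sum_i[-\tfrac12w_i,\tfrac12w_i].
\]

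\emph{Step 2 (the segment term).} For a convex body $Z$ and a segment $S$ of length $s$ in direction $e_3$,
\[
V_3(Z+S)=V_3(Z)+s\,V_2(Z\mid e_3^\perp).
\]
The projection $Z\mid e_3^\perp$ is the planar zonotope $\frac h4\sum_i\ell_i[-\nu_i,\nu_i]$. This is $\frac h2$ times the $2$-dimensional projection body of $B$. In the plane, $\Pi B$ is the rotation by $\pi/2$ of $B-B$, so this zonotope has area $\frac{h^2}{4}V_2(B-B)$. Since the segment has length $s=A$, the segment term is $\frac{Ah^2}{4}V_2(B-B)$, which is exactly the first part of \eqref{eq:pyramid-projection-volume}.

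\emph{Step 3 (the main obstacle).} It remains to show $V_3(Z)=\frac{A^2h^2}{2}$. The zonotope volume formula gives
\[
V_3(Z)=\sum_{i<j<k}|\det(w_i,w_j,w_k)|.
\]
Expanding along the last coordinate,
\[
\det(w_i,w_j,w_k)=\frac{h^2\ell_i\ell_j\ell_k}{8}\,\bigl(h_i\det(\nu_j,\nu_k)+h_j\det(\nu_k,\nu_i)+h_k\det(\nu_i,\nu_j)\bigr),
\]
so the $h^2$ factor is correct and the claim becomes the purely planar identity
\[
\sum_{i<j<k}\ell_i\ell_j\ell_k\,\bigl|h_i\det(\nu_j,\nu_k)+h_j\det(\nu_k,\nu_i)+h_k\det(\nu_i,\nu_j)\bigr|=4A^2.
\]
I expect this to be the hard step. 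The bracket vanishes exactly when the three edge lines of $B$ are concurrent, and it is (up to sign) the quantity whose square over $\det(\nu_j,\nu_k)\det(\nu_k,\nu_i)\det(\nu_i,\nu_j)$ gives twice the area of the triangle cut out by the three edge lines.

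I would try two routes, in this order:
\begin{enumerate}
\item Triangulate $B$ from the interior point $0$ into triangles $\conv\{0,\text{edge}_i\}$, of areas $A_i=\tfrac12h_i\ell_i$, and interpret $\ell_j\ell_k\det(\nu_j,\nu_k)$ as twice the area of the parallelogram spanned by edge vectors. The bracket then becomes a combination of such areas, and I would show the signed sum telescopes to $\bigl(\sum_iA_i\bigr)^2=A^2$. The key task is to determine the sign pattern of the bracket: it should be constant once the edges are taken in cyclic order, which would let me remove the absolute values.
\item If the sign analysis fails, use an induction on the number of edges by cutting off one vertex of $B$, which changes both sides in a controlled way. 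The base case of a triangle is consistent with the tetrahedron value: $V_2(B-B)=6A$ in \eqref{eq:pyramid-ratio} gives ratio $18$, matching Theorem~\ref{thm:upper-inequality}.
\end{enumerate}
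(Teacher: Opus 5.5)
Your Steps 1 and 2 are correct and follow the paper's setup. Let $p_1,\dots,p_m$ be the vertices of $B$ in counterclockwise order, $e_i=p_{i+1}-p_i$, and $d_i=\det(p_i,p_{i+1})=\ell_ih_i$. Then your $w_i$ are exactly the paper's lateral area vectors $\tfrac12(hRe_i,d_i)$, where $Re_i=\ell_i\nu_i$ is $e_i$ rotated by $-\pi/2$. Your segment term, from $V_3(Z+S)=V_3(Z)+s\,V_2(Z\mid e_3^\perp)$, is a repackaging of the paper's sum over the zonotope triples containing the base vector $(0,0,-A)$. Your reduction of $V_3(Z)=A^2h^2/2$ to the planar identity is also right. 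But that identity is the entire nontrivial content of the proposition, and you do not prove it: the sign pattern is only conjectured, the value $4A^2$ is only hoped for, and neither route is carried out. Route~2 is also not a safe fallback as described. Truncating a vertex keeps every existing edge line, so starting from triangles it never produces, for instance, a parallelogram. So there is a genuine gap at Step~3.

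Two ideas close it. \emph{Sign:} put $y_i=\nu_i/h_i$. Your bracket equals $h_ih_jh_k\bigl(\det(y_j,y_k)+\det(y_k,y_i)+\det(y_i,y_j)\bigr)$, which is $h_ih_jh_k$ times twice the signed area of the triangle $y_iy_jy_k$. Because $0\in\operatorname{int}B$, the points $y_1,\dots,y_m$ are the vertices of $B^\circ$ in counterclockwise order. Hence every bracket with $i<j<k$ is strictly positive and the absolute values can be dropped; the concurrency interpretation you mention carries no sign information. \emph{Value:} since $\ell_j\ell_k\det(\nu_j,\nu_k)=\det(e_j,e_k)$, the signed sum equals $\sum_\ell d_\ell C_\ell$ with
\[
C_\ell=\sum_{\ell<j<k}\det(e_j,e_k)-\sum_{i<\ell<k}\det(e_i,e_k)+\sum_{i<j<\ell}\det(e_i,e_j),
\]
and the key fact is that $C_\ell$ does not depend on $\ell$. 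Put $E_-=\sum_{i<\ell}e_i$ and $E_+=\sum_{i>\ell}e_i$. From $E_-+e_\ell+E_+=0$ one gets $\det(e_\ell,E_+)=\det(E_-,e_\ell)=-\det(E_-,E_+)$. Splitting $S=\sum_{i<j}\det(e_i,e_j)$ according to the positions of $i,j$ relative to $\ell$ then gives $S=C_\ell$. Finally, writing $p_i=p_1+\sum_{j<i}e_j$ in the shoelace formula gives $2A=\sum_i\det(p_i,e_i)=S$, so the sum is $2A\sum_\ell d_\ell=4A^2$. This is exactly the shape your Route~1 anticipates (``telescopes to $(\sum_iA_i)^2$''). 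What actually has to be proved is that the coefficient of each $A_\ell=d_\ell/2$ is independent of $\ell$.
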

	
	\begin{proof}
		
		We first consider the case in which \(B\) is a polygon (a two-dimensional polytope). By composing a horizontal translation with a volume-preserving shear, we may assume that \(0\in\operatorname{int}B\) without changing the representation in \eqref{eq:normalized-pyramid}. Let \(p_1,\ldots,p_m\) denote the vertices of \(B\) in counterclockwise order, with indices taken modulo \(m\). Define
		\[
		e_i=p_{i+1}-p_i,
		\qquad
		d_i=\det(p_i,p_{i+1})>0.
		\]
		Then
		\begin{equation}\label{eq:d-sum}
			\sum_{i=1}^m d_i=2A.
		\end{equation}
		Let $R(a,b)=(b,-a)$.  The outer area vectors of the facets of $K$ are
		\begin{equation*}\label{eq:area-vectors}
			q_0=(0,0,-A),\qquad
			q_i=\frac12(hRe_i,d_i),\quad 1\leq i\leq m.
		\end{equation*}
		By Cauchy's formula, $\Pi K$ is the zonotope generated by these area
		vectors.  The zonotope volume formula gives
		\begin{equation}\label{eq:zonotope-discrete}
			V_3(\Pi K)
			=\sum_{0\leq i<j<k\leq m}
			|\det(q_i,q_j,q_k)|.
		\end{equation}
		Since $B-B$ is a translate of the planar zonotope
		$\sum_i[0,e_i]$, whose area is the determinant sum in
		\eqref{eq:base-terms}, the sum of all terms involving \(q_0\) is
		\begin{equation}\label{eq:base-terms}
			\frac{Ah^2}{4}\sum_{i<j}|\det(e_i,e_j)|
			=\frac{Ah^2}{4}V_2(B-B).
		\end{equation}
		
		It remains to sum the terms involving three side facets.  Take
		\(
		w_i=2q_i=(hRe_i,d_i).
		\)
		The points $Re_i/d_i$ are the vertices of $B^\circ$ in counterclockwise order.
		Hence,
		\[
		\det(w_i,w_j,w_k)>0\ \ (i<j<k).
		\]
		Expanding the determinant in the last coordinates, the coefficient of
		$h^2d_\ell$ in the sum of these determinants is
		\begin{equation*}\label{eq:C-ell}
			C_\ell=
			\sum_{\ell<j<k}\det(e_j,e_k)
			-\sum_{i<\ell<k}\det(e_i,e_k)
			+\sum_{i<j<\ell}\det(e_i,e_j).
		\end{equation*}
		To prove that $C_\ell=2A$, we define
		\[
		E_-:=\sum_{i<\ell}e_i,\qquad
		E_+:=\sum_{i>\ell}e_i,\qquad
		S:=\sum_{i<j}\det(e_i,e_j).
		\]
		Since $E_-+e_\ell+E_+=0$, we have
		\[
		\det(E_-,E_+)=-\det(E_-,e_\ell),
		\qquad
		\det(e_\ell,E_+)=\det(E_-,e_\ell).
		\]
		Decomposing \(S\) according to the positions of the indices relative to \(\ell\), and then using the preceding two identities, we obtain
		\[
		\begin{split}
			S
			&=\sum_{i<j<\ell}\det(e_i,e_j)
			+\sum_{\ell<i<j}\det(e_i,e_j)
			-\det(E_-,E_+)\\
			&=C_\ell.
		\end{split}
		\]
		Finally, using \[ p_i=p_1+\sum_{j<i}e_j \qquad\text{and}\qquad \sum_i e_i=0, \] together with the polygonal area formula and the bilinearity of the determinant, we obtain 	\[
		2A=\sum_i\det(p_i,p_{i+1})
		=\sum_i\det(p_i,e_i)
		=\sum_{j<i}\det(e_j,e_i)=S.
		\]
		Thus, $C_\ell=2A$ for every $\ell$.  Together with
		\eqref{eq:d-sum}, this yields
		\[
		\sum_{i<j<k}\det(w_i,w_j,w_k)
		=h^2\sum_\ell C_\ell d_\ell=4A^2h^2.
		\]		
		Since $q_i=w_i/2$, the terms in \eqref{eq:zonotope-discrete} involving only side facets contribute a total of \(A^2h^2/2\). Combining this identity with \eqref{eq:base-terms} proves \eqref{eq:pyramid-projection-volume} for polygons.
	
		To extend \eqref{eq:pyramid-projection-volume} to an arbitrary planar convex body \(B\), we choose a sequence of polygons \(B_k\) converging to \(B\) in the Hausdorff metric, and let
		\[
		A_k=V_2(B_k),
		\qquad
		K_k=\operatorname{conv}\bigl((B_k\times\{0\})\cup\{(0,h)\}\bigr).
		\]
		Then \(A_k\to A\), \(K_k\to K\), \(B_k-B_k\to B-B\), and \(\Pi K_k\to\Pi K\) in the Hausdorff metric. By the continuity of volume, passing to the limit in \eqref{eq:pyramid-projection-volume} for \(K_k\) establishes the same formula for \(K\). Since
		\[
		V_3(K)=\frac{Ah}{3},
		\]
		substituting this into \eqref{eq:pyramid-projection-volume} yields \eqref{eq:pyramid-ratio}.
	\end{proof}
	
	\begin{proof}[Proof of Theorem \ref{thm:equality-case}]
		Assume that equality holds in \eqref{eq:main-inequality}. Proposition \ref{prop:polar-classification} implies that \(K\) is a generalized pyramid. By affine invariance, after applying a suitable shear parallel to the base, we may assume that \(K\) has the representation in \eqref{eq:normalized-pyramid}. Combining \eqref{eq:pyramid-ratio} with the equality in \eqref{eq:main-inequality}, we obtain
		\begin{equation}\label{3.1}
		V_2(B-B)=6V_2(B).
	\end{equation}
		By the equality characterization in the planar Rogers-Shephard difference-body inequality \cite{RogersShephard1957}, identity \eqref{3.1}
		holds if and only if \(B\) is a triangle. Therefore, the generalized pyramid \(K\) with base \(B\) is a tetrahedron.
	\end{proof}

\Needspace{10\baselineskip}
	\section{Proof of Theorem~\ref{thm:high-dimensional-counterexamples}}\label{c}
	\label{sec:transfer}
	In this section, we construct counterexamples to the conjectured simplex bound in every dimension \(n\ge 9\), thereby proving Theorem~\ref{thm:high-dimensional-counterexamples}.
	
	Let
	\begin{equation*}\label{eq:simplex-cube}
		\Delta_{m-1}=\left\{s\in\R^m:s_i\geq0,\ \sum_{i=1}^m s_i=1\right\},
		\qquad
		C_m=[-\tfrac12,\tfrac12]^m.
	\end{equation*}
	Here, \(\Delta_{m-1}\) denotes the standard \((m-1)\)-dimensional simplex
	in \(\mathbb{R}^m\), and \(C_m\) denotes the centered unit cube in
	\(\mathbb{R}^m\).
	For a linear subspace \(F\subset\mathbb R^m\), let \(V_F\) denote the intrinsic Euclidean volume on \(F\). When several arguments are given, \(V_F\) denotes the corresponding mixed volume in \(F\). We write \(P_F\) for the orthogonal projection onto \(F\). When \(\dim F=n\), we define \(\mathcal P_n\) for convex bodies in \(F\) through an isometric identification of \(F\) with \(\mathbb R^n\). Let \(\mathbf 1^\perp\) denote the orthogonal complement of the vector \(\mathbf 1=(1,\ldots,1) \in \mathbb{R}^m\). 
	
	The following proposition converts suitable section projection estimates into counterexamples to the conjectured bound.

	\begin{proposition} \label{prop:transfer}
		Let $n\geq2$ and 
		$\lambda\in\relint\Delta_{m-1}$. Suppose that $F\subset\one^\perp\subset\R^m$ is an
		$n$-dimensional subspace and
		\begin{equation*}\label{eq:def-section}
			S=(\lambda+F)\cap\Delta_{m-1}.
		\end{equation*}
		Then, there exists a full-dimensional $n$-polytope $Q\subset F$ such that
		\begin{equation}\label{eq:transfer-lower}
			\mathcal P_n(Q)\geq n^n V_F(S)V_F(P_FC_m).
		\end{equation}
		In particular, if
		\begin{equation}\label{eq:section-violation}
			V_F(S)V_F(P_FC_m)>\frac{n+1}{n!},
		\end{equation}
		then $Q$ violates the conjectured inequality \eqref{eq:brannen-general}.
	\end{proposition}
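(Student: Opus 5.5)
The plan is to build $Q$ by Minkowski's existence theorem, choosing its facet area vectors so that $\Pi Q$ is exactly $P_F C_m$, and then to bound $V_F(Q)$ through Minkowski's first inequality with $S$ as the comparison body. I do not use $S$ itself as $Q$: its projection body is a zonotope with unequal weights on the generators $P_F e_i$, which is awkward to compare with $P_FC_m$.

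\emph{Step 1: construct $Q$.} Set $a_i=P_Fe_i\in F$ for $i=1,\dots,m$.
\begin{itemize}
\item Since $F\subset\one^\perp$, we have $\sum_i a_i=P_F\one=0$.
\item The nonzero $a_i$ span $F$, because $\langle x,a_i\rangle=x_i$ for $x\in F$, so no nonzero $x\in F$ is orthogonal to all of them.
\end{itemize}
Grouping positively parallel vectors, Minkowski's existence theorem in $F$ gives a polytope $Q\subset F$, unique up to translation and full-dimensional in $F$. Its facets have outer unit normals $u_i=a_i/|a_i|$ and areas $|a_i|$, so it has at most $m$ facets. (The sign convention is immaterial: replacing $a_i$ by $-a_i$ only reflects $Q$.)

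By Cauchy's formula,
\[
h_{\Pi Q}(u)=\tfrac12\sum_i|\langle u,a_i\rangle| = h_{\sum_i[-a_i/2,a_i/2]}(u)=h_{P_FC_m}(u),
\]
so $\Pi Q=P_FC_m$. Hence $\mathcal P_n(Q)=V_F(P_FC_m)/V_F(Q)^{n-1}$.

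\emph{Step 2: compute the mixed volume $V_1(S,Q)$.} Translate so that $\lambda$ becomes the origin, i.e. regard $S$ as $S'=\{x\in F:\lambda+x\in\Delta_{m-1}\}$. Because $\lambda\in\relint\Delta_{m-1}$, the body $S'$ contains $0$ in its relative interior. For $x\in F$ the constraint $\lambda+x\in\Delta_{m-1}$ reads $x_i\ge-\lambda_i$ for all $i$, the sum condition being automatic.

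Working with the reflected normals $-a_i$, which describe the facets of $S$, we get
\[
h_{S'}(-a_i)=\sup_{x\in S'}(-x_i)\le\lambda_i .
\]
Therefore
\[
nV_1(S',Q)=\sum_i h_{S'}(-a_i)\le\sum_i\lambda_i=1 .
\]
Equality holds when $S$ meets every facet hyperplane, but the inequality is all that is needed. For consistency I take $Q$ to be the body with outer area vectors $-a_i$; its projection body is unchanged.

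\emph{Step 3: apply Minkowski's first inequality.} The inequality $V_1(S,Q)^n\ge V_F(S)V_F(Q)^{n-1}$ gives $V_F(Q)^{n-1}\le n^{-n}V_F(S)^{-1}$. Substituting into Step 1 yields \eqref{eq:transfer-lower}. The final claim then follows at once: \eqref{eq:section-violation} makes the right-hand side of \eqref{eq:transfer-lower} exceed $(n+1)n^n/n!$, which is the bound in \eqref{eq:brannen-general}.

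\emph{Main obstacle.} The delicate point is the sign and normalization bookkeeping. One must check three things together:
\begin{itemize}
\item the area vectors of $Q$ are exactly the vectors whose half-segments sum to $P_FC_m$;
\item the support values of the recentred section in those normal directions are bounded by $\lambda_i$;
\item degenerate indices with $P_Fe_i=0$ are harmless, since they contribute nothing to either side.
\end{itemize}
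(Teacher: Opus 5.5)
Your proposal is correct and follows the paper's proof essentially step for step. Like the paper, you take $Q$ with area vectors $-P_Fe_i$ from Minkowski's existence theorem, so that Cauchy's formula gives $\Pi Q=P_FC_m$. You then use the support bound $h_{S-\lambda}(-P_Fe_i)\le\lambda_i$ to get $nV_F(Q[n-1],S-\lambda)\le 1$, and finish with Minkowski's first inequality.
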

	
	\begin{proof}
	Let \(e_1,\ldots,e_m\) be the standard basis of \(\mathbb R^m\), and define
	\[
	q_i=P_Fe_i\in F,
	\qquad 1\le i\le m.
	\]
	Since \(P_F\) maps \(\mathbb R^m\) onto \(F\), the vectors \(q_1,\ldots,q_m\) span \(F\). Moreover, it follows from \(F\subset\mathbf 1^\perp\) that
	\[
	\sum_{i=1}^{m} q_i
	=
	P_F\mathbf{1}
	=
	0.
	\]
	Suppose that \(v\in F\) satisfies
	\[
	\langle v,q_i\rangle\ge 0
	\ \ \text{for every }i.
	\]
	Taking the inner product of \(v\) with \(\sum_{i=1}^m q_i=0\) shows that
	\[
	\sum_{i=1}^m\langle v,q_i\rangle=0.
	\]
	Hence \(\langle v,q_i\rangle=0\) for every \(i\). Since the \(q_i\) span \(F\), it follows that \(v=0\).
	After we remove the zero vectors and combining positively parallel vectors by addition, Minkowski’s existence theorem \cite{Schneider2014} yields a full-dimensional polytope \(Q\subset F\) whose facet-area vectors are the corresponding vectors \(-q_i\). Cauchy’s projection formula then gives
		\begin{equation}\label{eq:PiQ-cube}
			\Pi Q
			=\sum_{i=1}^m[-\tfrac12q_i,\tfrac12q_i]
			=P_FC_m.
		\end{equation}
		
		Translate the section \(S\) by \(-\lambda\). That is,
		\begin{equation}\label{eq:def-shifted-section}
			\widetilde S=S-\lambda
			=\{z\in F:z_i\geq-\lambda_i\text{ for every }i\}.
		\end{equation}
		Since $z\in F$, we have
		\(
		\langle q_i,z\rangle=z_i
		\). Hence,
		\begin{equation}\label{eq:support-shifted-section}
			h_{\widetilde S}(-q_i)\leq\lambda_i.
		\end{equation}
		By the mixed-volume integral representation we yield
		\begin{equation}\label{eq:mixed-at-most}
			V_F(Q[n-1],\widetilde S)
			=\frac1n\sum_{q_i\ne0}
			\lVert q_i\rVert h_{\widetilde S}\!\left(-\frac{q_i}{\lVert q_i\rVert}\right)
			=\frac1n\sum_{q_i\ne0}h_{\widetilde S}(-q_i)
			\leq\frac1n\sum_i\lambda_i=\frac1n.
		\end{equation}
  Since every
		$\lambda_i>0$, the origin lies in $\operatorname{relint}\widetilde{S}$.
		Thus, $\widetilde S$ is an $n$-dimensional convex body in $F$.
		It follows from Minkowski's first inequality that
		\begin{equation}\label{eq:minkowski-first-transfer}
			V_F(Q)^{n-1}V_F(\widetilde S)
			\leq V_F(Q[n-1],\widetilde S)^n\leq n^{-n}.
		\end{equation}
		Since $V_F(\widetilde S)=V_F(S)$, 
		equations \eqref{eq:PiQ-cube}-\eqref{eq:minkowski-first-transfer} imply that
		\[
		\frac{V_F(\Pi Q)}{V_F(Q)^{n-1}}
		\geq n^nV_F(S)V_F(P_FC_m),
		\]
		which is \eqref{eq:transfer-lower}.  The last assertion follows immediately.
	\end{proof}
	
		Proposition \ref{prop:transfer} provides a concrete geometric construction rather than merely an abstract existence argument. After zero vectors are omitted and positively parallel
		vectors are merged, the resulting area vectors $-P_Fe_i$ determine $Q$
		uniquely up to translation.  Hence, the data considered below determine a polytope with at most $m$ facets. In our applications, $m=n+2$.
	
	Let $m=n+2$. We now consider a subspace of codimension one in \(\mathbf 1^\perp\), or equivalently, of codimension two in \(\mathbb R^m\). For a nonzero vector
	\(
	a=(a_1,\ldots,a_m)\in\mathbf 1^\perp,
	\)
	define
		\begin{equation*}\label{eq:def-Fa}
		F_a=\{\one,a\}^{\perp}.
	\end{equation*}
	Then, \(\dim F_a=m-2=n\). We also define
		\begin{equation*}\label{eq:def-Da}
		D(a)=\sum_{1\leq i<j\leq m}|a_i-a_j|.
	\end{equation*}
	The normalized Cavalieri profile associated with the family of parallel sections of the simplex is defined by
\begin{equation}\label{eq:canonical-density}
	f_a(t):=\frac{(m-1)!}{\sqrt m\,\lVert a\rVert}
	\mathcal H^{m-2}\!\left(
	\Delta_{m-1}\cap\{s:\langle a,s\rangle=t\}\right),
	\ \ \ \min_i a_i<t<\max_i a_i.
\end{equation}
	The coarea formula shows that
	\[
	\int_{\min_i a_i}^{\max_i a_i}f_a(t),dt=1.
	\]
	We use this normalization as a convenient shorthand for the Euclidean volumes of these sections.
	
	\begin{proposition}
		\label{prop:codim-one-product}
		Let $\lambda\in\relint\Delta_{m-1}$ and
		$t=\langle a,\lambda\rangle$.  Then
		\begin{equation}\label{eq:codim-one-product}
			V_{F_a}\bigl((\lambda+F_a)\cap\Delta_{m-1}\bigr)
			V_{F_a}(P_{F_a}C_m)
			=\frac{D(a)f_a(t)}{(m-1)!}.
		\end{equation}
		Since $m=n+2$, the strict inequality in \eqref{eq:section-violation} holds if and only if
		\begin{equation*}\label{eq:density-target}
			D(a)f_a(t)>(m-1)^2.
		\end{equation*}
	\end{proposition}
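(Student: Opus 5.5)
The plan is to compute the two factors on the left of \eqref{eq:codim-one-product} separately. Both reduce to quantities expressed through the orthogonal complement $F_a^\perp=\operatorname{span}\{\one,a\}$. Since $a\in\one^\perp$ and $a\neq0$, this complement has the orthonormal basis
\[
u=\one/\sqrt m,\qquad v=a/\lVert a\rVert .
\]

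\emph{Section factor.} For $\lambda\in\relint\Delta_{m-1}$, a point $s\in\lambda+F_a$ satisfies $\langle\one,s\rangle=\langle\one,\lambda\rangle=1$ and $\langle a,s\rangle=\langle a,\lambda\rangle=t$. Conversely, any $s$ with these two properties satisfies $s-\lambda\perp\one,a$. Hence
\[
(\lambda+F_a)\cap\Delta_{m-1}=\Delta_{m-1}\cap\{s:\langle a,s\rangle=t\}.
\]
This is an $(m-2)=n$-dimensional set lying in a translate of $F_a$, so its intrinsic volume is its $\mathcal H^{m-2}$-measure. By the definition \eqref{eq:canonical-density} of $f_a$, this measure equals $\sqrt m\,\lVert a\rVert f_a(t)/(m-1)!$. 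Since $\lambda\in\relint\Delta_{m-1}$, the value $t$ lies strictly between $\min_i a_i$ and $\max_i a_i$, so $f_a(t)$ is defined.

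\emph{Projection factor.} $P_{F_a}C_m$ is the zonotope $\sum_i[-\tfrac12q_i,\tfrac12q_i]$ with $q_i=P_{F_a}e_i$. By the zonotope volume formula (already used in \eqref{eq:zonotope-discrete}),
\[
V_{F_a}(P_{F_a}C_m)=\sum_{|I|=n}\bigl|\det{}_{F_a}(q_i)_{i\in I}\bigr|.
\]
For each $n$-subset $I$, I will use the standard complementary-minor identity: if $U$ is an $m\times m$ orthogonal matrix whose first $n$ columns span $F$ and whose last two columns are $u,v$, then the $I\times[n]$ minor of $U$ equals, up to sign, the $I^c\times\{u,v\}$ minor (Jacobi's identity for orthogonal matrices). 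Since $\bigl|\det{}_{F_a}(q_i)_{i\in I}\bigr|$ is exactly the absolute value of the $I\times[n]$ minor of $U$, for $I^c=\{i,j\}$ we get
\[
\bigl|\det{}_{F_a}(q_k)_{k\in I}\bigr|
=\left|\det\begin{pmatrix}1/\sqrt m & a_i/\lVert a\rVert\\ 1/\sqrt m & a_j/\lVert a\rVert\end{pmatrix}\right|
=\frac{|a_i-a_j|}{\sqrt m\,\lVert a\rVert}.
\]
Summing over pairs $i<j$ gives $V_{F_a}(P_{F_a}C_m)=D(a)/(\sqrt m\,\lVert a\rVert)$.

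Multiplying the two factors, the $\sqrt m\,\lVert a\rVert$ cancels, which yields \eqref{eq:codim-one-product}. For the final claim, substitute into \eqref{eq:section-violation} with $n=m-2$. The inequality becomes
\[
\frac{D(a)f_a(t)}{(m-1)!}>\frac{m-1}{(m-2)!},
\]
which is equivalent to $D(a)f_a(t)>(m-1)^2$.

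The only step requiring care is the complementary-minor identity relating $n\times n$ minors along $F_a$ to $2\times2$ minors along $F_a^\perp$. I would justify it via Jacobi's theorem on minors of the inverse (here the transpose) of an orthogonal matrix; the remaining steps are direct substitutions.
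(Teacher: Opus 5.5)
Your proposal is correct and follows essentially the same route as the paper. You read the section volume off from the definition of $f_a$ (the paper phrases this via the coarea normalization), and you compute $V_{F_a}(P_{F_a}C_m)$ from the zonotope minor formula together with the complementary-minor identity for the orthogonal matrix obtained by adjoining $\one/\sqrt m$ and $a/\lVert a\rVert$ to an orthonormal basis of $F_a$.
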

	
	\begin{proof}
		The intrinsic $(m-1)$-volume of the standard simplex is
		\begin{equation*}\label{eq:intrinsic-simplex-volume}
			\mathcal H^{m-1}(\Delta_{m-1})=\frac{\sqrt m}{(m-1)!}.
		\end{equation*}
		The gradient of $s\mapsto\langle a,s\rangle$ on the affine hull of the
		simplex has norm $\lVert a\rVert$.  The coarea formula and the definition of
		$f_a$ therefore give
		\begin{equation}\label{eq:section-coarea}
			V_{F_a}\bigl((\lambda+F_a)\cap\Delta_{m-1}\bigr)
			=\frac{\sqrt m\,\lVert a\rVert}{(m-1)!}f_a(t).
		\end{equation}
		
		We next compute the cube projection.  Choose an $m\times(m-2)$ matrix
		$U$ whose columns form an orthonormal basis of $F_a$.  The zonotope
		determinant
		formula \cite{McMullen1984} gives
		\begin{equation}\label{eq:zonotope-minors}
			V_{F_a}(P_{F_a}C_m)
			=\sum_{|I|=m-2}|\det U_I|.
		\end{equation}
		Adding the two orthonormal vectors
		\(m^{-1/2}\mathbf{1}\) and \(\lVert a\rVert^{-1}a\)
		as columns to \(U\) produces an orthogonal matrix. For any index set \(I\) whose complement is \(I^c=\{i,j\}\), the complementary-minor identity for orthogonal matrices yields
		\begin{equation}\label{eq:complementary-minor}
			|\det  U_I|
			=\frac{|a_i-a_j|}{\sqrt m\,\lVert a\rVert}.
		\end{equation}
		Summing \eqref{eq:complementary-minor} in
		\eqref{eq:zonotope-minors}, we obtain
		\begin{equation}\label{eq:cube-projection-D}
			V_{F_a}(P_{F_a}C_m)
			=\frac{D(a)}{\sqrt m\,\lVert a\rVert}.
		\end{equation}
		Multiplication of \eqref{eq:section-coarea} and
		\eqref{eq:cube-projection-D} proves \eqref{eq:codim-one-product}.
		
		Since $n=m-2$, we get
		\[
		\frac{n+1}{n!}
		=\frac{m-1}{(m-2)!}
		=\frac{(m-1)^2}{(m-1)!},
		\]
		which proves the last assertion,
	\end{proof}
	
We next obtain the desired strict inequality by perturbing a section from a
degenerate critical configuration. Let
\[
b=(j)_{j=-4}^{4}.
\]
For \(0<\varepsilon<1/4\), we define
	\begin{equation*}\label{eq:geometric-knots}
	a_\varepsilon=(-1,\varepsilon b,1)\in\one^\perp.
\end{equation*}
After computing the first variation, we may decrease \(\varepsilon\)
further if necessary. The condition \(\varepsilon<1/4\) ensures that
the nine middle coordinates of \(a_\varepsilon\) lie strictly between
its two outer coordinates. At \(\varepsilon=0\), these nine coordinates
all coincide at zero. For \(\varepsilon>0\), the nine middle coordinates split into nine
equally spaced levels. Let
	\begin{equation}\label{eq:base-F-lambda}
	F_\varepsilon=\{\one,a_\varepsilon\}^{\perp},
	\qquad \lambda^{(0)}=\frac1{11}\one.
\end{equation}
For \(p=(p_{-4},\ldots,p_4)\in\Delta_8\), we define
\begin{equation*}\label{eq:def-L}
	L(p)=\sum_{j=-4}^{4}jp_j.
\end{equation*}
Here, $d\bar p$ denotes normalized Euclidean volume on the simplex.
		
	\begin{lemma}
		\label{lem:cavalieri-section}
		Let
		\begin{equation*}\label{eq:def-I-epsilon}
			I(\varepsilon)=\int_{\Delta_8}
			\bigl(1+\varepsilon|L(p)|\bigr)^{-9}\,d\bar p.
		\end{equation*}
		Then,
		\begin{equation*}\label{eq:base-density}
			f_{a_\varepsilon}(0)=5I(\varepsilon).
		\end{equation*}
		Moreover, 
		\begin{equation}\label{eq:base-geometric-ratio}
			\Gamma_0(\varepsilon)
			:=\frac{
			V_{F_\varepsilon}((\lambda^{(0)}+F_\varepsilon)\cap\Delta_{10})
			V_{F_\varepsilon}(P_{F_\varepsilon}C_{11})}{10/9!}
			=\frac{D(a_\varepsilon)}{D(a_0)}I(\varepsilon).
		\end{equation}
	\end{lemma}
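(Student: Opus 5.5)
The plan is to read $f_{a}$ as a probability density. By \eqref{eq:canonical-density}, the intrinsic volume of $\Delta_{m-1}$, and the coarea formula (as in the proof of Proposition~\ref{prop:codim-one-product}), $f_a$ is exactly the density of the random variable $\langle a,s\rangle$ when $s$ is uniformly distributed on $\Delta_{m-1}$. Here $m=11$, so $s=(s_0,s_{-4},\dots,s_4,s_{10})$ has the flat Dirichlet$(1,\dots,1)$ law. I will use the aggregation and independence property of the Dirichlet distribution. Write the middle block as $(s_{-4},\dots,s_4)=r\,p$ with $r=\sum_{j}s_j$ and $p\in\Delta_8$. Then $(s_0,s_{10},r)$ has the Dirichlet$(1,1,9)$ law, with density $90\,r^8$ on $\{s_0,s_{10}\ge0,\ s_0+s_{10}\le1\}$ where $r=1-s_0-s_{10}$. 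Moreover $p$ is independent of $(s_0,s_{10},r)$ and uniform on $\Delta_8$ with respect to $d\bar p$.

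With this decomposition, $\langle a_\varepsilon,s\rangle=s_{10}-s_0+c\,r$ with $c=\varepsilon L(p)$. Note that $|c|\le 4\varepsilon<1$. I first condition on $p$, so that $c$ is fixed, and compute the density of $T=s_{10}-s_0+cr$ at $T=0$. For this I change variables from $(s_0,s_{10})$ to $(T,r)$, using
\[
s_0+s_{10}=1-r,\qquad s_{10}-s_0=T-cr .
\]
The Jacobian is $|\partial(s_0,s_{10})/\partial(T,r)|=\tfrac12$. On the slice $T=0$ we get $s_{10}=\tfrac12(1-r-cr)$ and $s_0=\tfrac12(1-r+cr)$. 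Both are nonnegative exactly when $0\le r\le(1+|c|)^{-1}$. The conditional density at $0$ is therefore
\[
\int_0^{(1+|c|)^{-1}}\tfrac12\cdot 90\,r^8\,dr=5\,(1+|c|)^{-9}.
\]
Averaging over $p$, which is legitimate by independence and Fubini, gives $f_{a_\varepsilon}(0)=5I(\varepsilon)$.

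For \eqref{eq:base-geometric-ratio}, I apply Proposition~\ref{prop:codim-one-product} with $m=11$, $\lambda=\lambda^{(0)}$ and $a=a_\varepsilon\in\one^\perp$. Since $a_\varepsilon\in\one^\perp$, we have $t=\langle a_\varepsilon,\lambda^{(0)}\rangle=0$. The numerator of $\Gamma_0(\varepsilon)$ is then $D(a_\varepsilon)f_{a_\varepsilon}(0)/10!=5D(a_\varepsilon)I(\varepsilon)/10!$. Dividing by $10/9!$ gives $\Gamma_0(\varepsilon)=D(a_\varepsilon)I(\varepsilon)/20$. It remains to check $D(a_0)=20$. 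For $a_0=(-1,0,\dots,0,1)$, the pair of outer coordinates contributes $2$, and each outer coordinate paired with the nine zeros contributes $9$, for a total of $2+9+9=20$.

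The only delicate step is justifying the probabilistic reformulation. This consists of the identification of $f_a$ with the density via the stated normalization, and the Dirichlet aggregation and independence fact. If needed, the latter can be verified directly by the linear change of variables $s\mapsto(s_0,s_{10},r,p)$ on the simplex, whose Jacobian is $r^8$. Everything after that is the elementary one-dimensional integral above.
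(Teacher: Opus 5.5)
Your proof is correct and is essentially the paper's argument in probabilistic language. Both proofs write the nine middle coordinates as $w\,p$ (your $r\,p$) with $p$ uniform on $\Delta_8$ and reduce $f_{a_\varepsilon}(0)$ to $45\int_{\Delta_8}\int_0^{(1+\varepsilon|L(p)|)^{-1}}w^8\,dw\,d\bar p=5I(\varepsilon)$; your change of variables $(s_0,s_{10})\mapsto(T,r)$ with Jacobian $\tfrac12$ simply absorbs the paper's factor $(1-w)$ from its $v$-parametrization against its coarea factor $[2(1-w)]^{-1}$. The second part, via Proposition~\ref{prop:codim-one-product} with $t=0$ and $D(a_0)=20$, is exactly the paper's observation $D(a_0)f_{a_0}(0)=100$; the only blemish is cosmetic: naming the first outer coordinate $s_0$ clashes with the middle coordinate $s_0$ of the block $(s_{-4},\dots,s_4)$.
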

	
	\begin{proof}
		
		For a point of \(\Delta_{10}\), let \(w\) denote the sum of its nine
		middle barycentric coordinates. When \(w>0\), write these coordinates
		as \(wp\), where \(p\in\Delta_8\). The two outer coordinates, whose sum
		is \(1-w\), can then be parametrized as
		\[
		\frac{1-w}{2}(1-v)
		\ \ \text{and}\ \ 
		\frac{1-w}{2}(1+v),
		\ \ -1\le v\le 1.
		\]
		Note that the degenerate boundary cases \(w=0\) and \(w=1\) have measure zero and
		do not affect the subsequent integration. The normalized simplex volume element is
		\begin{equation*}\label{eq:cavalieri-volume-element}
			90w^8(1-w)\,dw\,d\bar p\,\frac{dv}{2}.
		\end{equation*}
		Indeed, because \(\Delta_8\) has dimension \(8\), scaling the middle simplex by \(w\) contributes the factor
		\(w^8\), while the outer
		one-dimensional simplex contributes the factor \(1-w\). The constant
		\(90\) is determined by
		\[
		\int_0^1 w^8(1-w)\,dw=\frac{1}{90},
		\]
		and \(dv/2\) is the normalized uniform measure on the interval
		\([-1,1]\).
		On the other hand,
		\begin{equation*}\label{eq:cavalieri-height}
			\langle a_\varepsilon,u\rangle
			=(1-w)v+\varepsilon wL(p).
		\end{equation*}
		For fixed $p$ and $w$, the corresponding central slice is nonempty when
		\[
			w\leq\frac{1}{1+\varepsilon|L(p)|}.
		\]
		Its one-dimensional coarea factor is $[2(1-w)]^{-1}$.  Cavalieri's
		principle therefore gives
		\[
			f_{a_\varepsilon}(0)
			=45\int_{\Delta_8}\int_0^{(1+\varepsilon|L(p)|)^{-1}}
			w^8\,dw\,d\bar p
			=5I(\varepsilon).
		\]
		At $\varepsilon=0$ we have $D(a_0)f_{a_0}(0)=100$.  Proposition
		\ref{prop:codim-one-product} now yields
		\eqref{eq:base-geometric-ratio}.
	\end{proof}
	
	We now establish the key strict geometric comparison needed in the
	argument. In the proof below, all integrals are taken with respect
	to normalized volume measure on the relevant cubes or simplices.
	
	\begin{lemma}
		\label{lem:cube-folding}
		Let $C=[-1/2,1/2]^8$ and $h(x)=x_1+\cdots+x_8$.  Then
		\begin{equation}\label{eq:cube-folding}
			\int_C|h(x)|\,dx<\int_Ch(x)^2\,dx.
		\end{equation}
	\end{lemma}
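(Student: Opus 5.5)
We will prove \eqref{eq:cube-folding} by computing both sides exactly as rational numbers. The right-hand side is immediate. Since the coordinates $x_1,\dots,x_8$ are independent, uniform on $[-1/2,1/2]$, with mean $0$ and variance $1/12$, we have
\[
\int_C h(x)^2\,dx=\frac{8}{12}=\frac23 .
\]
For the left-hand side we use the symmetry $x\mapsto -x$ of $C$, which gives $\int_C|h|\,dx=2\int_C h_-\,dx$ with $h_-=\max(-h,0)$.

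Next we set $S=h+4=\sum_i(x_i+\tfrac12)$. This is the sum of eight independent uniform variables on $[0,1]$, so $h_-=(4-S)_+$. The density of $S$ is the Irwin--Hall B-spline
\[
f_S(s)=\frac1{7!}\sum_{k=0}^8(-1)^k\binom8k(s-k)_+^7 .
\]
Integrating $(4-s)$ against it, or equivalently applying the eighth finite difference to the function $c\mapsto (c-s)_+^9/9!$, gives the standard formula
\[
\int_C(4-S)_+\,dx=\frac1{9!}\sum_{k=0}^{4}(-1)^k\binom8k(4-k)^9 .
\]
We would verify this formula by induction on the number of summands: each added uniform variable convolves with $\one_{[0,1]}$, which raises the power and applies one more difference.

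Evaluating the sum gives $4^9-8\cdot3^9+28\cdot2^9-56=262144-157464+14336-56=118960$. Hence
\[
\int_C|h|\,dx=\frac{2\cdot118960}{362880}=\frac{237920}{362880}<\frac{241920}{362880}=\frac23 ,
\]
which is the strict inequality \eqref{eq:cube-folding}.

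The main obstacle is getting the piecewise-polynomial formula right, including the sign conventions and the truncation at $k\le4$. A heuristic check helps here: the Gaussian approximation gives $\int_C|h|\approx\sqrt{2/\pi}\sqrt{2/3}\approx0.651$. This is close to the exact value $0.6556$ and comfortably below $2/3$, so a computational slip would likely show up as a mismatch with this estimate.
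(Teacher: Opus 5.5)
Your proof is correct, and it takes a genuinely different route from the paper. I checked the arithmetic: $4^9-8\cdot3^9+28\cdot2^9-56=118960$, so $\int_C|h|\,dx=237920/9!=1487/2268$, which is indeed below $2/3=1512/2268$.

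There is one cosmetic slip. The finite difference is taken in $c$ of $c\mapsto c_+^9/9!$, that is, $\mathbb E(c-S)_+=\frac{1}{9!}\sum_k(-1)^k\binom8k(c-k)_+^9$ evaluated at $c=4$. The formula you actually wrote down is right.

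The paper avoids any computation. It writes $C=C_4\times C_4$ and $h=T(x)+T(y)$ with $T=x_1+\cdots+x_4$. It then uses the reflection $y\mapsto -y$ to turn $\int|T(x)+T(y)|$ into the Gini mean difference $\int_0^1\!\int_0^1|q(s)-q(t)|\,ds\,dt=2\int_0^1(2s-1)q(s)\,ds$, where $q$ is the quantile function of $T$. Cauchy--Schwarz finishes the argument, because $\int_0^1 q^2=1/3=\int_0^1(2s-1)^2$. The variance of $T$ coincides with that of the uniform law on $[-1,1]$, and this is exactly where the splitting $8=4+4$ matters. Strictness comes from $T$ not being uniformly distributed: its upper tail near $2$ has mass of order $\delta^4$.

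The two approaches buy different things.
\begin{itemize}
\item Yours is mechanical and self-contained, and it gives the exact value and an explicit margin $25/2268$. The whole argument rests on the Irwin--Hall formula, so you should actually include its justification (your induction sketch written out, or a standard reference) rather than leave it as ``we would verify.''
\item The paper's is computation-free and explains the inequality as an instance of the sharp bound $\mathbb E|X-X'|\le 2\sigma/\sqrt3$, with equality only for uniform laws. However, it yields no quantitative gap and needs a separate qualitative argument for strictness.
\end{itemize}
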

	
	\begin{proof}
		On the four-cube $C_4=[-1/2,1/2]^4$, take
		\[
			T(x)=x_1+\cdots+x_4,
		\]
		and let $q$ be the nondecreasing rearrangement of $T$ on the unit interval.
		Central reflection in the second factor of $C=C_4\times C_4$ shows that
		$T(x)+T(y)$ and $T(x)-T(y)$ have the same distribution.  Using equimeasurability and decomposing the unit square into the
		two regions \(s\ge t\) and \(s\le t\), we obtain
		\begin{equation}\label{eq:quantile-folding}
			\int_{C_4\times C_4}|T(x)+T(y)|\,dx\,dy
			=\int_0^1\!\int_0^1|q(s)-q(t)|\,ds\,dt
			=2\int_0^1(2s-1)q(s)\,ds.
		\end{equation}
		By symmetry under coordinate reflections, all cross terms in the
		expansion of \(T(x)^2\) integrate to zero. Hence,
		\[
			\int_0^1q(s)^2\,ds
			=\int_{C_4}T(x)^2\,dx
			=4\int_{-1/2}^{1/2}r^2\,dr
			=\int_0^1(2s-1)^2\,ds.
		\]
		Applying the Cauchy-Schwarz inequality to \eqref{eq:quantile-folding} and using the
		equality of the second moments above, it follows that
		\[
			\begin{split}
			\int_{C_4\times C_4}|T(x)+T(y)|\,dx\,dy
			&\leq2\int_0^1q(s)^2\,ds\\
			&=\int_{C_4\times C_4}(T(x)+T(y))^2\,dx\,dy.
			\end{split}
		\]
		We claim that the first inequality above is strict. Equality in the
		Cauchy--Schwarz inequality would imply that \(q(s)\) is proportional
		to \(2s-1\) almost everywhere. Since \(q\) is the quantile function of
		\(T\), this would force \(T\) to be uniformly distributed on an
		interval. This conclusion is incompatible with the geometry of the upper level sets of \(T\) near its
		maximum value \(2\). For every sufficiently small \(\delta>0\), the
		set
		\[
		T^{-1}\bigl((2-\delta,2]\bigr)
		\]
		is a vertex cap of \(C_4\) and is homothetic to a fixed four-dimensional simplex with homothety ratio
		\(\delta\). Its volume is therefore
		proportional to \(\delta^4\). By contrast, a uniformly distributed random variable would assign mass of order \(\delta\) to the interval \((2-\delta,2]\). Hence equality cannot occur, and the first inequality
		is strict. Since \(T(x)+T(y)\) is the diagonal coordinate \(h\) on
		\(C\), this proves \eqref{eq:cube-folding}.
	\end{proof}
	
	Combining the preceding strict comparison with the perturbative construction, we now establish the desired violation in dimension nine.
	
	\begin{proposition}
		\label{prop:nine-section}
		For every sufficiently small $\varepsilon>0$, the section specified by \eqref{eq:base-F-lambda} satisfies
		\begin{equation*}\label{eq:nine-section-violation}
			V_{F_\varepsilon}
			\bigl((\lambda^{(0)}+F_\varepsilon)\cap\Delta_{10}\bigr)
			V_{F_\varepsilon}(P_{F_\varepsilon}C_{11})
			>\frac{10}{9!}.
		\end{equation*}
	\end{proposition}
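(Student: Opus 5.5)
By \eqref{eq:base-geometric-ratio} in Lemma~\ref{lem:cavalieri-section}, the claim is equivalent to $\Gamma_0(\varepsilon)=\frac{D(a_\varepsilon)}{D(a_0)}I(\varepsilon)>1$ for small $\varepsilon>0$. Since $\Gamma_0(0)=1$, I would compute the first-order expansion of $\Gamma_0$ at $\varepsilon=0$ and show that its linear coefficient is strictly positive. Lemma~\ref{lem:cube-folding} will supply exactly this strict positivity.

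\emph{Step 1: the factor $D$.} For $0<\varepsilon<1/4$ the middle coordinates $\varepsilon j$ lie in $(-1,1)$. Hence for each $j$ we have $|{-1}-\varepsilon j|+|1-\varepsilon j|=2$, so the outer pair together with the outer--middle pairs always contributes $2+18=20$. The middle pairs contribute
\[
\varepsilon\sum_{-4\le i<j\le 4}(j-i)=\varepsilon\sum_{d=1}^{8}d(9-d)=120\,\varepsilon.
\]
Thus $D(a_0)=20$ and $D(a_\varepsilon)/D(a_0)=1+6\varepsilon$ exactly.

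\emph{Step 2: expansion of $I$.} Since $|L(p)|\le 4$ on $\Delta_8$, Taylor's theorem applied to $(1+x)^{-9}$ gives
\[
(1+\varepsilon|L|)^{-9}=1-9\varepsilon|L|+O(\varepsilon^2)
\]
uniformly in $p$. Therefore $I(\varepsilon)=1-9\varepsilon\,\mathbb E|L|+O(\varepsilon^2)$, where $\mathbb E$ denotes expectation with respect to $d\bar p$. Combining with Step 1,
\[
\Gamma_0(\varepsilon)=1+\varepsilon\bigl(6-9\,\mathbb E|L|\bigr)+O(\varepsilon^2).
\]
So it suffices to prove $\mathbb E|L|<2/3$.

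\emph{Step 3: reduction to the cube (the main point).} Uniform $p\in\Delta_8$ has the law of the spacings of $8$ i.i.d.\ uniform points $U_1,\dots,U_8$ on $[0,1]$. Write $U_{(0)}=0$, $U_{(9)}=1$, and $p_{k-4}=U_{(k+1)}-U_{(k)}$ for $k=0,\dots,8$. Summation by parts then gives
\[
L=\sum_{k=0}^{8}(k-4)\bigl(U_{(k+1)}-U_{(k)}\bigr)=4-\sum_{k=1}^{8}U_{(k)}=-\sum_{i=1}^{8}\Bigl(U_i-\tfrac12\Bigr).
\]
This last sum no longer depends on the ordering, so $L$ is distributed as $-h(x)$ with $x$ uniform on $C=[-1/2,1/2]^8$. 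By Lemma~\ref{lem:cube-folding},
\[
\mathbb E|L|=\int_C|h|\,dx<\int_C h^2\,dx=8\cdot\tfrac1{12}=\tfrac23.
\]
As a consistency check, the Dirichlet covariances ($\operatorname{Var}p_j=8/810$, $\operatorname{Cov}(p_i,p_j)=-1/810$) also give $\mathbb E L^2=2/3$. Hence the linear coefficient $6-9\,\mathbb E|L|$ is strictly positive, and $\Gamma_0(\varepsilon)>1$ for all sufficiently small $\varepsilon>0$.

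The main obstacle is Step 3: identifying the simplex-uniform linear form $L$ with the centered cube sum $h$ via the spacings representation. Once this identification is made, the strict inequality reduces to Lemma~\ref{lem:cube-folding}, and the remaining steps are routine bookkeeping.
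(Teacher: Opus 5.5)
Your proposal is correct and follows essentially the same route as the paper. Both arguments use the exact identity $D(a_\varepsilon)=D(a_0)+120\varepsilon$, the first-order expansion of $I(\varepsilon)$ with linear coefficient $-9\int_{\Delta_8}|L|$, the gap (spacings) parametrization that turns $L$ into $-h$ on $[-1/2,1/2]^8$, and Lemma~\ref{lem:cube-folding}. The only difference is cosmetic: you compute the second moment $2/3$ directly on the cube, whereas the paper obtains $\int_{\Delta_8}L^2\,d\bar p=2/3$ from the Dirichlet moments on the simplex.
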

	
	\begin{proof}
		
		Introduce ordered variables
		\[
		0\le x_1\le\cdots\le x_8\le 1
		\]
		and parametrize \(p=(p_{-4},\ldots,p_4)\in\Delta_8\) by the successive
		gaps between these variables
		\[
		(p_{-4},\ldots,p_4)
		=
		\bigl(x_1,x_2-x_1,\ldots,x_8-x_7,1-x_8\bigr).
		\]
		In affine coordinates, this change of variables has Jacobian determinant one, and a straightforward telescoping calculation yields
	\begin{equation*}
			L(p)=4-\sum_{i=1}^8x_i.
		\end{equation*}
		Let
		\[
		\mathcal O
		=
		\bigl\{x\in[0,1]^8:0\le x_1\le\cdots\le x_8\le1\bigr\}.
		\]
		Under the gap parametrization above, the normalized volume measure
		\(d\bar p\) on \(\Delta_8\) pulls back to \(8!\,dx\) on \(\mathcal O\).
		Up to a set of measure zero, the cube \([0,1]^8\) is the disjoint union
		of the \(8!\) coordinate permutations of \(\mathcal O\). Since the functions
		\[
		x\longmapsto
		\left|4-\sum_{i=1}^{8}x_i\right|
		\ \ \text{and} \ \
		x\longmapsto
		\left(4-\sum_{i=1}^{8}x_i\right)^2
		\] are invariant under coordinate permutations, it follows that
	\[
	\begin{aligned}
		\int_{\Delta_8}|L(p)|\,d\bar p
		=
		\int_{[0,1]^8}
		\left|4-\sum_{i=1}^{8}x_i\right|\,dx \ \ \text{and} \ \
		\int_{\Delta_8}L(p)^2\,d\bar p
		=
		\int_{[0,1]^8}
		\left(4-\sum_{i=1}^{8}x_i\right)^2\,dx.
	\end{aligned}
	\]
	Under the translation \(z_i=x_i-\frac12\), we obtain
	\[
	4-\sum_{i=1}^{8}x_i=-\sum_{i=1}^{8}z_i.
	\]
Hence, it follows from Lemma~\ref{lem:cube-folding} that 
	\begin{equation}\label{eq:folded-simplex}
		\int_{\Delta_8}|L|\,d\bar p
		<\int_{\Delta_8}L^2\,d\bar p.
	\end{equation}
	
	For \(0<\varepsilon<\frac14\), the nine middle coordinates
	\(\varepsilon j\), \(-4\le j\le4\), lie strictly between the two outer
	coordinates \(-1\) and \(1\). For each middle coordinate, the sum of its
	distances to the two outer coordinates is
	\[
	(1+\varepsilon j)+(1-\varepsilon j)=2.
	\]
	Thus, the contributions involving at least one outer coordinate remain
	unchanged, and only the pairwise distances among the middle coordinates
	depend on \(\varepsilon\). Therefore,
	\[
	D(a_\varepsilon)=D(a_0)+\varepsilon D(b).
	\]
	Since the coordinates of \(b=(j)_{j=-4}^{4}\) are ordered, it follows that
	\[
	D(b)
	=
	\sum_{-4\le i<j\le4}(j-i)
	=
	2\sum_{j=-4}^{4}j^2.
	\]
For normalized volume on the standard simplex, we have
		\[
			\int_{\Delta_8}p_i^2\,d\bar p=\frac{2}{9\cdot10},
			\qquad
			\int_{\Delta_8}p_ip_k\,d\bar p=\frac{1}{9\cdot10}
			\quad(i\neq k).
		\]
		Using the standard second-moment formula for normalized volume on
		\(\Delta_8\), together with
		\[
		\sum_{j=-4}^{4}j=0,
		\]
		we obtain, upon expanding \(L^2\),
		\[
		\int_{\Delta_8}L(p)^2\,d\bar p
		=
		\left(\sum_{j=-4}^{4}j^2\right)
		\mathbin{\Big/}
		(9\cdot10).
		\]			
		Since $D(a_0)=20$, this is equivalent to
		$D(b)/D(a_0)=9\int_{\Delta_8}L^2\,d\bar p$.
		Moreover, differentiation under the integral sign gives
		\[
			I'(0+)=-9\int_{\Delta_8}|L|\,d\bar p.
		\]
		Differentiating \eqref{eq:base-geometric-ratio} from the right at \(\varepsilon=0\), and using
		\(I(0)=1\), we therefore conclude that
		\begin{equation*}\label{eq:positive-first-variation}
			\begin{aligned}
				\Gamma_0'(0+)
				&=
				\frac{D(b)}{D(a_0)}+I'(0+)\\
				&=
				9\int_{\Delta_8}
				\bigl(L(p)^2-|L(p)|\bigr)\,d\bar p
				>0,
			\end{aligned}
		\end{equation*}
		where the strict inequality follows from \eqref{eq:folded-simplex}.  Since
		$\Gamma_0(0)=1$, the desired strict inequality holds for every sufficiently
		small positive $\varepsilon$.
	\end{proof}
	
The nine-dimensional counterexample can be lifted to every higher dimension by an elementary pyramidal construction.
	
	Let \(a=(a_1,\ldots,a_m)\in\mathbb R^m\setminus\{0\}\) satisfy
	\[
	\sum_{i=1}^{m}a_i=0,
	\]
	and define
	\[
	S_a=\Delta_{m-1}\cap a^\perp.
	\]
	Adjoining a zero coordinate to \(a\) produces a corresponding section that is exactly the pyramid over \(S_a\) with apex \(e_{m+1}\). More precisely,
	identifying \(S_a\) with \(S_a\times\{0\}\subset\mathbb R^{m+1}\), we
	have
	\begin{equation*}\label{eq:pyramid-section}
		S_{(a,0)}=\operatorname{conv}(S_a,e_{m+1}).
	\end{equation*}
Since \(a\perp\mathbf 1\), the orthogonal projection of \(e_{m+1}\) onto the affine hull
\(\operatorname{aff}(S_a\times\{0\})\) is
\[
\left(\frac{1}{m}\mathbf 1,0\right).
\]
Hence the height of the pyramid \(S_{(a,0)}\), with base
\(S_a\times\{0\}\), is
\[
\operatorname{dist}\!\left(
e_{m+1},\operatorname{aff}(S_a\times\{0\})
\right)
=
\sqrt{\frac{m+1}{m}}.
\]
Since \(S_a\) has dimension \(m-2\), the pyramid \(S_{(a,0)}\) has
dimension \(m-1\). Therefore, the pyramid volume formula yields
\[
\mathcal H^{m-1}\bigl(S_{(a,0)}\bigr)
=
\frac{1}{m-1}\sqrt{\frac{m+1}{m}}\,
\mathcal H^{m-2}(S_a).
\]
Applying \eqref{eq:canonical-density} and the preceding volume identity, we obtain
	\begin{equation}\label{eq:pyramid-density-recursion}
		f_{(a,0)}(0)=\frac{m}{m-1}f_a(0).
	\end{equation}
Thus, the lifting step follows directly from the Euclidean volume formula for a pyramid.

	Choose \(\varepsilon>0\) sufficiently small such that the conclusion of Proposition~\ref{prop:nine-section} holds.  
	For $r\geq0$, extend \(a_\varepsilon\) by \(r\) zero coordinates and define
	\begin{equation}\label{eq:lifted-data}
		a_\varepsilon^{(r)}=(a_\varepsilon,0^r),
		\quad m_r=11+r,
		\quad n_r=9+r,
		\quad F_r=\{\one,a_\varepsilon^{(r)}\}^{\perp},
		\quad \lambda^{(r)}=\frac1{m_r}\one.
	\end{equation}
	Write
	\[
		D_\varepsilon=D(a_\varepsilon),
		\qquad A_\varepsilon=\sum_i|(a_\varepsilon)_i|,
	\]
	and let $D_r=D(a_\varepsilon^{(r)})$.  Repeated application of \eqref{eq:pyramid-density-recursion} yields
	\begin{equation*}\label{eq:lifted-quantities}
		f_{a_\varepsilon^{(r)}}(0)
		=\frac{10+r}{10}f_{a_\varepsilon}(0),
		\qquad D_r=D_\varepsilon+rA_\varepsilon.
	\end{equation*}
	Define
	\[
	\Gamma_r
	=\frac{D_rf_{a_\varepsilon^{(r)}}(0)}{(10+r)^2}.
	\]
	Then,
	\begin{align*}
		\Gamma_{r+1}-\Gamma_r
		&=\frac{f_{a_\varepsilon}(0)}{10}
		\frac{10A_\varepsilon-D_\varepsilon}{(10+r)(11+r)}\notag\\
		&=\frac{f_{a_\varepsilon}(0)}{10}
		\frac{4\varepsilon\sum_{j=1}^{4}j(5-j)}
		{(10+r)(11+r)}>0,
		\label{eq:ratio-increment}
	\end{align*}
Thus, the sequence $\Gamma_r$ with  $r\geq0$ is strictly increasing. Therefore, $\Gamma_r\geq\Gamma_0>1$ for every $r\geq0$.
	
	The preceding computation shows that the strict violation persists under repeated application of the lifting construction, leading to the following proposition.
	
	\begin{proposition}
		\label{prop:all-sections}
		For every $r\geq0$,
		\begin{equation*}\label{eq:all-section-violations}
			V_{F_r}\bigl((\lambda^{(r)}+F_r)\cap\Delta_{m_r-1}\bigr)
			V_{F_r}(P_{F_r}C_{m_r})
			>\frac{n_r+1}{n_r!}.
		\end{equation*}
	\end{proposition}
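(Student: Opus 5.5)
The plan is to reduce the claimed inequality, for each $r$, to the density criterion of Proposition~\ref{prop:codim-one-product}, and then invoke the monotonicity of $\Gamma_r$ established just above.

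First, I will check that the data in \eqref{eq:lifted-data} fit the hypotheses of Proposition~\ref{prop:codim-one-product}, applied with $m=m_r=n_r+2$ and $a=a_\varepsilon^{(r)}$:
\begin{itemize}
\item $a_\varepsilon^{(r)}$ is nonzero;
\item it lies in $\one^\perp$, since $a_\varepsilon$ does and we only appended zeros;
\item $\lambda^{(r)}=\frac1{m_r}\one$ lies in $\relint\Delta_{m_r-1}$;
\item $t=\langle a_\varepsilon^{(r)},\lambda^{(r)}\rangle=\frac1{m_r}\sum_i (a_\varepsilon^{(r)})_i=0$.
\end{itemize}
The proposition then says that the desired inequality is equivalent to
\[
D_r\,f_{a_\varepsilon^{(r)}}(0)>(m_r-1)^2=(10+r)^2,
\]
that is, to $\Gamma_r>1$.

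Next I will justify $\Gamma_0>1$. By definition $\Gamma_0=D(a_\varepsilon)f_{a_\varepsilon}(0)/100$. By Proposition~\ref{prop:codim-one-product} with $m=11$, this equals the left-hand side of the inequality in Proposition~\ref{prop:nine-section} divided by $10/9!$. Indeed, $(m-1)^2/(m-1)!=100/10!=10/9!$, so it coincides with $\Gamma_0(\varepsilon)$ of \eqref{eq:base-geometric-ratio}. Proposition~\ref{prop:nine-section} gives $\Gamma_0(\varepsilon)>1$ for the chosen $\varepsilon$.

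Then I will confirm the two lifted identities, $f_{a_\varepsilon^{(r)}}(0)=\frac{10+r}{10}f_{a_\varepsilon}(0)$ and $D_r=D_\varepsilon+rA_\varepsilon$.

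For the density, I telescope \eqref{eq:pyramid-density-recursion} from $m=11$ up to $m=10+r$, with factors $\frac{m}{m-1}$. The recursion applies at each step because each intermediate vector still has coordinate sum zero.

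For $D_r$, each new zero coordinate contributes $\sum_i|(a_\varepsilon)_i-0|=A_\varepsilon$ against the original coordinates, and $0$ against the other new zeros.

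I will then verify the increment formula by computing
\[
\Gamma_{r+1}-\Gamma_r=\frac{f_{a_\varepsilon}(0)}{10}\Bigl(\frac{D_\varepsilon+(r+1)A_\varepsilon}{11+r}-\frac{D_\varepsilon+rA_\varepsilon}{10+r}\Bigr).
\]
The numerator simplifies to $10A_\varepsilon-D_\varepsilon$. Here
\[
A_\varepsilon=2+\varepsilon\sum_j|j|=2+20\varepsilon,\qquad D_\varepsilon=20+\varepsilon D(b)=20+120\varepsilon.
\]
Hence $10A_\varepsilon-D_\varepsilon=80\varepsilon>0$, which matches $4\varepsilon\sum_{j=1}^4 j(5-j)=80\varepsilon$. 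Induction then gives $\Gamma_r\ge\Gamma_0>1$ for all $r\geq 0$.

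The only delicate point is the bookkeeping identifying $\Gamma_r>1$ with the stated volume inequality. This requires getting the normalization $(m_r-1)!$ in Proposition~\ref{prop:codim-one-product} to match $(n_r+1)/n_r!$, and I will check it directly as in the last display of that proposition's proof.
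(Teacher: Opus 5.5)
Your proposal is correct and follows the paper's route. You reduce, via Proposition~\ref{prop:codim-one-product} with $t=0$ and $(n_r+1)/n_r!=(m_r-1)^2/(m_r-1)!$, to $\Gamma_r>1$, then combine $\Gamma_0>1$ from Proposition~\ref{prop:nine-section} with the pyramid recursion \eqref{eq:pyramid-density-recursion}, the identity $D_r=D_\varepsilon+rA_\varepsilon$, and the increment $10A_\varepsilon-D_\varepsilon=80\varepsilon>0$ (your explicit values $A_\varepsilon=2+20\varepsilon$, $D_\varepsilon=20+120\varepsilon$ just fill in arithmetic the paper states before the proposition).
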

	
	\begin{proof}
		Since $m_r-1=10+r$ and $n_r=m_r-2$, Proposition
		\ref{prop:codim-one-product} and definition of $\Gamma_r$ imply that
		\[
			\frac{
			V_{F_r}((\lambda^{(r)}+F_r)\cap\Delta_{m_r-1})
			V_{F_r}(P_{F_r}C_{m_r})}
			{(n_r+1)/n_r!}
			=\Gamma_r>1.
		\]
	\end{proof}
	
	\begin{proof}[Proof of Theorem~\ref{thm:high-dimensional-counterexamples}]
		Fix \(n\geq 9\) and let \(r=n-9\). Applying Proposition \ref{prop:transfer} to the data defined in \eqref{eq:lifted-data}, we obtain a full-dimensional polytope
		\[
		Q_n\subset F_r\cong\mathbb{R}^n
		\]
		with at most \(m_r=n+2\) facets. Combining the estimate in Proposition \ref{prop:transfer} with Proposition \ref{prop:all-sections}, we have
		\[
		\begin{aligned}
			\mathcal{P}_n(Q_n)
			&\geq
			n^n
			V_{F_r}\bigl((\lambda^{(r)}+F_r)\cap\Delta_{n+1}\bigr)
			V_{F_r}\bigl(P_{F_r}C_{n+2}\bigr) \\
			&>
			\frac{(n+1)n^n}{n!}.
		\end{aligned}
		\]
		Thus, \(Q_n\) satisfies \eqref{eq:main-lower}, completing the proof.
	\end{proof}

\end{document}